\author{Phillip M Bressie and David N Yetter}
\thanks{Substantial progress on this work was made while both authors were visiting the Mathematical  Sciences  Research  Institute,  Berkeley,  CA  and  the  balance  while  the  second author was either in residence at or attached to MSRI to the extent allowed by the anti-COVID-19  social  distancing  regime  begun  in  March  2020.   The  authors  wish  to  thank MSRI for its hospitality and the National Science Foundation for financial support under grant DMS-0441170.}
\address{Spring Hill College, Mobile, AL\\
Kansas State University, Manhattan, KS}
\title{Weakly $\omega$-categorified models of algebraic theories}
\keywords{categorification, $\omega$-category, PROs, quandles}
\newcommand{\catname}[1]{{\normalfont\textbf{#1}}}
\newcommand{\Set}{\catname{Set}}
\newcommand{\Cat}{\catname{Cat}}
\newcommand{\Graph}{\catname{Graph}}
\newcommand{\Glob}{\catname{Glob}}
\newcommand{\GlobPRO}{\catname{GlobPRO}}
\newcommand{\CartDuoidal}{\catname{CartDuoidal}}
\newcommand{\Coll}{\catname{Coll}} 
\newcommand{\Cont}{\catname{Cont}}
\newcommand{\wkp}{{\mathcal{W}}{\mathcal{P}}}
\newcommand{\ip}{{\mathfrak S}{\mathcal{P}}}
\newcommand{\Mon}{\catname{Mon}}
\newcommand{\one}{\textbf{1}}
\newcommand{\Tone}{\mathcal{T}(\one)}
\newcommand{\Tsquared}{\mathcal{T}^2(\one)}
\newcommand{\id}{\mathbbm{1}}
\DeclareSymbolFont{AMSa}{U}{msa}{m}{n}
\DeclareMathSymbol{\mysquare}{\mathord}{AMSa}{"03}
\newcommand{\pullbackmark}[2]{\save ;p+<.8pc,0pc>:(0,-1)::%
	(#1) *{\phantom{Z}} %
	;p+(#2)-(0,0) **@{-}%
	;p-(#1)+(0,0) *{\phantom{Z}} **@{-} \restore}
\newcommand{\pullbackSub}[2]{{}_{#1}\kern-\scriptspace{\times}_{#2}}
\newtheorem{theorem}{Theorem}
\begin{document}

\maketitle
\begin{abstract}
 We provide the expected constructions of weakly $\omega$-categorified models (in the sense of Bressie) of the theory of groups and quandles which arise by replacing the homotopies used to give equivalence relations in the theory of fundamental groups, fundamental quandles, and knot quandles with homotopies of all orders used as arrows of categorical dimensions one and greater, and discuss other related constructions of weakly $\omega$-categorifed algebras.
\end{abstract}


\section{Introduction}

Crane's original conception of categorification \cite{C,CF,CY} was that categorical structures from which passing to the set of isomorphism classes resulted in an interesting algebraic structure governing a functorially constructed field theory should be expected to govern the behavior of functorially constructed field theories with corresponding shift in topological (or physical) dimension.  So that, for instance, (3+1)-dimensional TQFTs should be constructed from ``Hopf categories'' in manners analogous to the constructions of (2+1)-dimensional TQFTs from Hopf algebras.  Tensor categories from this point of view are then seen as categorifications of rigs.

The notion has since been greatly broadened to include what might be thought of as iterations of the original conception. For instance, a monoidal category is a categorification of a monoid in the original sense. And then monoidal bicategories (tricategories with one object), monoidal tricategories, $\ldots$, and monoidal (weak) $\omega$-categories are all higher categorificaitons of monoids.  Hence there are now as many different conceptions of categorification as there are notions of higher categories.

It was a mantra of Crane's when he first conceived the notion \cite{Cp} that, like quantization, ``categorification is not a functor.''  At the level of rigs (or more generally models of some algebraic theory) this is certainly correct, as there are multiple fusion categories with the same fusion rig.  However, as was shown in \cite{B1,B2}, in the context of \cite{L}'s definition of weak $\omega$-categories it is possible to formulate functorial categorifications {\em at the level of theories}.

The original motivation for \cite{B1,B2} was to investigate categorifications of the quandles of \cite{J}.  At an abstract level the weak $\omega$-categorification of quandles is present in \cite{B1,B2}, by applying the general machinery developed therein to a PRO whose algebras in $\Set$ are quandles. But examples of models of the weakly $\omega$-categorified theory were lacking.  It is the purpose of this paper to remedy that lack by giving details of the expected construction in which the homotopies in \cite{J}'s development of the fundamental quandle and knot quandle are used not to give an equivalence relation, but as the 1-arrows of an $\omega$-categorified quandle, with iterated homotopies between them as the higher dimensional cells.

In the following section we fix our notational conventions.  Sections 3 and 4 review the main notions and results from \cite{B1,B2}.  Section 5 discusses categories of compactly generated weak Hausdorff spaces with additional structure, the topological setting for our main results.  Sections 6 gives our main theorem.  Section 7 considers examples of the main theorem including categorified groups and quandles, while the final section discusses our primary example, weakly $\omega$-categorified quandles, and briefly considers possible next steps in their study.

\section{Notational generalities}

We will usually denote composition in the customary non-diagrammatic order, writing the composition of $f:A\rightarrow B$ and $g:B \rightarrow C$ as $g(f)$. On occasion we will use the diagrammatic order, writing the same composition as $fg$. When we do this we will draw attention to the use of diagrammatic order in the text.  Some repeated compositions may be denoted using superscripts.  We will also have occasion to use the ``padded composition operators'' introduced in \cite{Ybook}:

\begin{definition} If $f_1,\ldots f_n$ are a sequence of maps in a monoidal category $({\mathcal{C}},\otimes, I, \alpha, \rho, \lambda)$ with the property that $t(f_i)$ and $s(f_{i+1})$ are isomorphic by a composition of prolongations of instances of the associator and unitors, then
$\lceil f_1 \ldots f_n \rceil$ denotes the composition, given in diagrammatic order by
\[ \alpha_0 f_1 \alpha_1 \ldots \alpha_{n-1} f_n \alpha_n \]

\noindent where the $\alpha_i$'s are compositions of prolongations of instances of the associator and unitors, the source of $\alpha_0$ is reduced (no tensorands of the monoidal unit $I$) and completely left parenthesized, the target of $\alpha_n$ is reduces and completely right parenthesized, and the composition is well-defined.
\end{definition}

When using this convention, we place tensor exponents on the left (resp. right) to indicate a fully left (resp. right) parenthesized tensor power of an object or arrow.

Throughout, when speaking about the categories $\Glob$ and $\Coll$, of globular sets and collections respectively, we will use the following conventions: The characters $\one$ and $\emptyset$ will be used to denote the terminal and initial globular sets respectively.  The strict $\omega$-category monad on $\Glob$ will be denoted by $\mathcal{T}$. The symbol $\mysquare$ will be used to denote the composition tensor product in $\Coll$ (defined below).  The monoidal unit for $\mysquare$ is the inclusion of generators into the free strict $\omega$-category generated by $\one$, which we will denote by $I: \one \hookrightarrow \Tone$.  The terminal collection, here denoted $\id: \Tone \rightarrow \Tone$, is given by the identity globular set morphism on  $\Tone$.  The initial collection, denoted $\{\}: \emptyset \rightarrow \Tone$, is given by the unique globular set map from the initial object $\emptyset$.  When it is clear from context, we will sometimes refer to a collection $a: \mathcal{A} \rightarrow \Tone$ by simply writing the underlying globular set $\mathcal{A}$.

We will also regularly adopt the convention of distinguishing ordinary sets $X$ from globular sets $\mathcal{X}$ by using a caligraphic font for the latter when such emphasis is helpful or clarifying.

\section{Classical PROs}

Following \cite{B1,B2} we have chosen to formalize algebraic theories using PROs (whose name is short for product category), a generalization of nonsymmetric operads which allows for operations like the diagonal.  We avoid axiomatizing a symmetric group action, preferring PROs to MacLane's notion of PROPs \cite{M65} (whose name is short for product and permutation category). 
		
		\begin{definition}
			A \textit{PRO} $P$ is a strict monoidal category whose object set is isomorphic to, and in fact identified with, $\mathbb{N}$ such that the monoidal product $+:P \times P \rightarrow P$ is given by addition of natural numbers at the level of objects.
			
			A \textit{homomorphism} of PROs is a strict monoidal functor which is given by ${\mathbbm 1}_{\mathbb N}$ on the sets of objects.
		\end{definition}
		
		Given a PRO $P$, we regard morphisms in $P(n,m)$ as operations of arity $n$ and coarity $m$.  Until an algebra for a PRO is specified, the objects $\mathbb{N}$ are placeholders for the arities and coarities of these operations, so that they may be composed with one another.  Once an algebra is specified for $P$, these `slots' will be filled with copies of the underlying object of the algebra (or elements thereof for algebras in a concrete category), justifying the use of the name operations for the morphisms of the PRO.

\begin{definition}
For an object $X$ in a monoidal category $(\mathcal{C}, \otimes, I, \ldots)$, the {\em tautological PRO} on $X$,
is the PRO $Taut_{\mathcal{C}}(X)$, with
\[ Taut_{\mathcal{C}}(X)(n, m) := {\mathcal{C}}(^{\otimes n}\!X, X^{\otimes m}) \]

\noindent with composition given in diagrammatic order by $f\cdot g := \lceil f g \rceil$, and addition given by $f + g = \lceil f\otimes g \rceil$.  (Note if $\mathcal{C}$ is strict, the padded-composition operators may be omitted.)
\end{definition}

\begin{definition} If $P$ is a PRO, {\em a $P$-algebra in a monoidal category $\mathcal{C}$} is an object $X$, equipped with a PRO homomorphism $\phi: P \rightarrow Taut_{\mathcal{C}}(X)$.

In the case where $P$ is $Taut_{\mathcal{C}}(Y)$ for some object $Y$ in a monoidal category $\mathcal{C}$, we will say that $P$-algebras are {\em models for the theory of $(Y,\mathcal{C}$}) or simply the theory of $Y$ if $\mathcal{C}$ is clear from context.
\end{definition}

In the case where $\mathcal{C}$ is $\Set$ with its cartesian monoidal structure, the maps induced on hom-sets by the PRO homomorphism $\phi: P \rightarrow Taut_\Set(X)$, 
\[ \phi_{n,m}: P(n,m) \rightarrow \Set(X^n,X^m) \] 
can be curried to give a family of maps $\ulcorner \phi_{n,m} \urcorner: P(n,m)\times X^n \rightarrow X^m$, satisfying obvious relations which can be read off from the preservation of composition and the monoidal product $+$ by $\phi$.
Similarly, if $\mathcal C$ is copowered over $\Set$ the components of the PRO homomorphism can be curried to give a family of maps $\ulcorner \phi_{n,m} \urcorner: P(n,m)\cdot ^{\otimes n}\!\!X \rightarrow X^{\otimes m}$.

As an aside, one virtue of our choice of using PROs rather than PROPs or Lawvere theories is that it allows for the interpretation of theories in not-necessarily symmetric monoidal categories, and for an immediate construction of the theory of an object in a non-symmetric monoidal category.

\section{Globular Sets, $\omega$-Categories and Globular PROs}

\subsection{ Globular Sets and Collections}

We begin by recalling the notion of a globular set:   the \textit{globe category} $\mathbb{G}$ has $\mathbb{N}$ as its set of objects.  Its morphisms are generated by $s_n: n \rightarrow n+1$ and $t_n: n \rightarrow n+1$ for all $n \in \mathbb{N}$ subject to the relations $s_{n+1}(s_n) = t_{n+1}(s_n)$ and $s_{n+1}(t_n) = t_{n+1}(t_n)$.  Schematically we depict $\mathbb G$ as

\[ [0]\begin{matrix} \stackrel{s_0}{\rightarrow} \\ \stackrel{t_0}{\rightarrow} \end{matrix} [1]
\begin{matrix} \stackrel{s_1}{\rightarrow} \\ \stackrel{t_1}{\rightarrow} \end{matrix} [2]
\begin{matrix} \stackrel{s_2}{\rightarrow} \\ \stackrel{t_2}{\rightarrow} \end{matrix} [3]
\begin{matrix} \stackrel{s_3}{\rightarrow} \\ \stackrel{t_3}{\rightarrow} \end{matrix} [4]
\begin{matrix} \stackrel{s_4}{\rightarrow} \\ \stackrel{t_4}{\rightarrow} \end{matrix}\ldots
\]

		\begin{definition}
			A \emph{globular set} is a \Set-valued presheaf on $\mathbb G$. We denote the category of globular sets by \Glob.
		\end{definition}

		More concretely, a globular set $\mathcal{G} = (\{G_n\}_{n\in \mathbb{N}},\{s_{\mathcal{G}}^n\},\{t_{\mathcal{G}}^n\})$ is specified by the following data: a countable collection of sets $\{G_n\}_{n \in \mathbb{N}}$, where each $G_n$ is called the set of $n$-cells of $\mathcal{G}$, together with source and target maps $s_{\mathcal{G}} = \{s_{\mathcal{G}}^n:G_n \rightarrow G_{n-1}\}$ and $t_{\mathcal{G}} = \{t_{\mathcal{G}}^n:G_n \rightarrow G_{n-1}\}$ subject to the relations $s_{\mathcal{G}}^n(s_{\mathcal{G}}^{n+1}) = s_{\mathcal{G}}^n(t_{\mathcal{G}}^{n+1})$ and $t_{\mathcal{G}}^n(s_{\mathcal{G}}^{n+1}) = t_{\mathcal{G}}^n(t_{\mathcal{G}}^{n+1})$ in each dimension $n \in \mathbb{N}$.

		Another piece of structure needed to define both the globular operads of \cite{B} and \cite{L} and the globular PROs of \cite{B1, B2} is the free strict $\omega$-category monad $\mathcal{T}: \Glob \rightarrow \Glob$. As shown in \cite{L}, where a detailed construction of $\mathcal T$ is given, $\mathcal T$ is a cartesian monad.  Briefly, this monad takes a globular set $\mathcal{X}$ and returns the underlying globular set of the free strict $\omega$-category generated by $\mathcal{X}$.  In other words, it takes a globular set $\mathcal{X}$ and constructs the globular set $\mathcal{T}(\mathcal{X})$ whose $n$-cells are named by all possible globular pasting diagrams of the same dimension $n$ (or less, allowing for degeneracies to represent identity cells) compatibly built out of the cells in $\mathcal{X}$.  On occasion we may follow \cite{B1, B2} and refer to the names of cells as ``globular words'', regarding them as generalized words which admit concatenation in each direction corresponding to an available composition operation in the $\omega$-category structure.
		
		Note, when we use the terms `pasting diagram' and `pasting scheme', they are in the sense used in \cite{P}, though all of the pasting schemes and diagrams will have only globes as cells.

		Of particular importance is the globular set $\Tone$ generated by the terminal globular set $\one$.  Just as elements of the natural numbers $\mathbb N$, the free monoid on the one element set, indexes lengths of words, or rather the arities of operations, in ordinary algebras, the cells of $\Tone$ index globular pasting schemes which take the role of the arities for globular pasting operations.  When we wish to regard the cells of a globular set $\mathcal{X}$ as pasting operations, we equip it with a morphism $x:\mathcal{X} \rightarrow \Tone$ which specifies globular arities as cells in $\Tone$ named by pasting schemes.  

		\begin{definition}
			A \emph{collection} is a globular set $\mathcal{X}$ equipped with a globular set map $x:\mathcal{X} \rightarrow \Tone$ called the \emph{arity map}.  The category of collections $\Coll$ is the slice category $\Glob / \Tone$.
		\end{definition}

	 $\Coll$, in addition to the usual structure of a topos including cartesian and cocartsian monoidal structures, has an additional monoidal structure with respect to a composition tensor product
	 \[\mysquare : \Coll \times \Coll \rightarrow \Coll \]
	 \noindent defined by:
		
		\begin{definition}
			Let $x: \mathcal{X} \rightarrow \Tone$ and $y: \mathcal{Y} \rightarrow \Tone$ be a pair of collections.  Their composition tensor product $x \mysquare y : \mathcal{X} \mysquare \mathcal{Y} \rightarrow \Tone$ is defined by the diagram:
			$$\xymatrix{\mathcal{X} \mysquare \mathcal{Y} \pullbackmark{0,2}{2,0} \ar[rr] \ar[dd] & & \mathcal{T}(\mathcal{Y}) \ar[r]^-{\mathcal{T}(y)} \ar[dd]^{\mathcal{T}(!_{\mathcal{Y}})} & \Tsquared \ar[r]^-{\mu_{\one}} & \Tone \\
				& &  \\
				\mathcal{X} \ar[rr]^{x} & & \Tone}$$
			where $!_{\mathcal{Y}}: \mathcal{Y} \rightarrow \one$ is the unique map from $\mathcal{Y}$ to the terminal globular set.  The underlying globular set $\mathcal{X} \mysquare \mathcal{Y}$ is the pullback of $x$ and $\mathcal{T}(!_{\mathcal{Y}})$ with the arity globular set map $x \mysquare y$ defined to be the composition along the top row.
		\end{definition}
		
		As was shown in \cite{L}, the product $\mysquare$ and the object $\one \rightarrow \Tone$, together with the associators and unitors given there (or left as an exercise to the reader), give a monoidal category structure on $\Coll$.

		At the level of individual cells, $\mathcal{X} \mysquare \mathcal{Y}$ is the collection whose $k$-cells are pairs $(a,\psi)$ consisting of a $k$-cell $a \in \mathcal{X}$ and a `globular word' $\psi$ of $k$-cells from $\mathcal{Y}$ indexed by the arity of $a$.  In $\mathcal{X} \mysquare \mathcal{Y}$, the `globular letters' in the globular word $\psi \in \mathcal{T}(\mathcal{Y})$ may be compatibly `glued together' via the shape of $x(a) \in \Tone$ in the sense that each globular letter of $\psi$ is a $k$-cell whose arity shape under $y$ can replace a particular $k$-cell in the pasting scheme which names the cell $x(a) \in \Tone$. We can thus think of the cells of $\mathcal{X} \mysquare \mathcal{Y}$ as composable pairs of a cell $a$ of $\mathcal{X}$ and a `word' of cells from $\mathcal{Y}$, each of whose `letters' may be plugged into a cell of the $k$-dimensional pasting scheme which names the arity cell $x(a)$.
		
		Furthermore, the arity for a composable pair in $\mathcal{X} \mysquare \mathcal{Y}$ may be thought of as the `sum' of the arities of each letter in $\psi$ `glued together' in the shape of the pasting scheme which names $x(a)$.  More precisely, note that the map $\mathcal{T}(y)$ takes a word of cells from $\mathcal{Y}$ and returns a word of arity cells (i.e. a cell in $\mathcal{T}^2(\one)$ which is named by a pasting diagram of pasting schemes).  The component at $\one$ of the unit transformation $\mu$ for $\mathcal{T}$ then takes this globular word of arities and returns the cell in $\Tone$ which is named by the pasting scheme we would get if we strictly pasted together this diagram of schemes.  We can think of the cells in $\mathcal{T}^2(\one)$ as being named by factorizations of pasting schemes.  From this perspective, $\mu_{\one}$ essentially reduces this factorization by specifying the cell in $\Tone$ which is named by the strict pasting composition specified by the factorization.

In fact, as was shown in \cite{B1,B2} 

	\begin{theorem}
			The category $\Coll$ has a (right)closed monoidal structure with respect to the monoidal product $\mysquare$.
		\end{theorem}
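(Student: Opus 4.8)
The plan is to exhibit, for each fixed collection $y:\mathcal{Y}\to\Tone$, a right adjoint to the endofunctor $-\mysquare\mathcal{Y}:\Coll\to\Coll$; supplying such an adjoint for every $\mathcal{Y}$ is exactly what it means for $(\Coll,\mysquare)$ to be right closed. The whole argument rests on one structural observation, namely that $\Glob$, being a category of presheaves, is a topos and hence locally cartesian closed. Consequently, for every morphism $f:\mathcal{A}\to\mathcal{B}$ of globular sets the pullback functor $f^{*}:\Glob/\mathcal{B}\to\Glob/\mathcal{A}$ sits in a string of adjoints $\Sigma_{f}\dashv f^{*}\dashv\Pi_{f}$, where $\Sigma_{f}$ is post-composition with $f$ and $\Pi_{f}$ is the dependent product. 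Note that I do not need the cartesianness of $\mathcal{T}$ for closedness; that hypothesis is what underwrites the associators and unitors for $\mysquare$, which are already available from \cite{L} as cited above, so I may take the monoidal structure as given and prove only the closedness.

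First I would read the defining pullback square as a factorization of $-\mysquare\mathcal{Y}$ through the slice over $\mathcal{T}(\mathcal{Y})$. Write $q_{\mathcal{Y}}:=\mathcal{T}(!_{\mathcal{Y}}):\mathcal{T}(\mathcal{Y})\to\Tone$ for the morphism forming the bottom-right leg of the pullback, and $p_{\mathcal{Y}}:=\mu_{\one}\circ\mathcal{T}(y):\mathcal{T}(\mathcal{Y})\to\Tone$ for the morphism whose composite with the top projection produces the arity map $x\mysquare y$. Unwinding the definition of $\mysquare$, the globular set $\mathcal{X}\mysquare\mathcal{Y}$ is precisely $q_{\mathcal{Y}}^{*}(\mathcal{X})$ regarded as an object of $\Glob/\mathcal{T}(\mathcal{Y})$, while its arity map is obtained by post-composing the projection to $\mathcal{T}(\mathcal{Y})$ with $p_{\mathcal{Y}}$. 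In other words, as functors $\Coll\to\Coll$ we have the identification
\[ -\mysquare\mathcal{Y}\;=\;\Sigma_{p_{\mathcal{Y}}}\circ q_{\mathcal{Y}}^{*}. \]

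Granting this factorization, the adjoint is immediate: since $q_{\mathcal{Y}}^{*}\dashv\Pi_{q_{\mathcal{Y}}}$ and $\Sigma_{p_{\mathcal{Y}}}\dashv p_{\mathcal{Y}}^{*}$, the composite $\Sigma_{p_{\mathcal{Y}}}\circ q_{\mathcal{Y}}^{*}$ has right adjoint $\Pi_{q_{\mathcal{Y}}}\circ p_{\mathcal{Y}}^{*}$. I would therefore define the internal hom by $[\mathcal{Y},\mathcal{Z}]:=\Pi_{q_{\mathcal{Y}}}(p_{\mathcal{Y}}^{*}(\mathcal{Z}))$ and read off the adjunction $\Coll(\mathcal{X}\mysquare\mathcal{Y},\mathcal{Z})\cong\Coll(\mathcal{X},[\mathcal{Y},\mathcal{Z}])$, natural in $\mathcal{X}$ and $\mathcal{Z}$, which is exactly the right closed structure.

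The step I expect to require the most care is the functorial identification $-\mysquare\mathcal{Y}=\Sigma_{p_{\mathcal{Y}}}\circ q_{\mathcal{Y}}^{*}$: the defining diagram describes $\mathcal{X}\mysquare\mathcal{Y}$ only object-by-object, so one must check that this description is natural in the varying collection $\mathcal{X}$, i.e. that a morphism $\mathcal{X}\to\mathcal{X}'$ over $\Tone$ induces the expected map of pullbacks and that the two routes around agree. This is a routine but slightly fiddly diagram chase invoking the universal property of the pullback defining $\mysquare$ together with the functoriality of $q_{\mathcal{Y}}^{*}$. One small subtlety worth flagging is that the two morphisms $\mathcal{T}(\mathcal{Y})\to\Tone$ in play, $q_{\mathcal{Y}}=\mathcal{T}(!_{\mathcal{Y}})$ and $p_{\mathcal{Y}}=\mu_{\one}\circ\mathcal{T}(y)$, are genuinely distinct: one must pull back along the former (the leg matched with $x$) and post-compose along the latter (which records the arity), and not conflate the two.
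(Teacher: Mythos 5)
Your proposal is correct and is essentially the paper's own argument: the paper writes $-\mysquare \mathcal{B} = \Sigma_{\mu_{\one}}\Sigma_{\mathcal{T}(b)}\mathcal{T}(!_{\mathcal{B}})^*$ and takes the composite of right adjoints $\Pi_{\mathcal{T}(!_{\mathcal{B}})}\mathcal{T}(b)^*\mu_{\one}^*$ as the internal hom, which is exactly your $\Sigma_{p_{\mathcal{Y}}}\circ q_{\mathcal{Y}}^{*}$ with $\Sigma_{p_{\mathcal{Y}}}$ merely left factored as $\Sigma_{\mu_{\one}}\circ\Sigma_{\mathcal{T}(y)}$. Your flagged subtlety about not conflating the two maps $\mathcal{T}(\mathcal{Y})\to\Tone$ is well taken and consistent with the paper's formula.
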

		
		The internal hom for the closed structure is given by observing that $-\mysquare B$ can be expressed in terms of standard topos-theoretic functors as
		\[ - \mysquare \mathcal{B} = \Sigma_{\mu_{\one}}\Sigma_{\mathcal{T}(b)}\mathcal{T}(!_{\mathcal{B}})^* \]
		\noindent each functor in the composition, of course, admits a right adjoint giving the internal hom from $B$ as
		$$[\mathcal{B},-] = \Pi_{\mathcal{T}(!_{\mathcal{B}})}\mathcal{T}(b)^*\mu_{\one}^*$$

\subsection{Identity objects}
	
		There are four particular collections each of which is bound up with some notion of identity.  
		
		The first is the terminal collection $\id: \Tone \rightarrow \Tone$, which as for all terminal objects in categories with finite products is the unit for the cartesian monoidal structure.
		
		There is also a collection $I: \one \hookrightarrow \Tone$ whose arity map is simply the inclusion of generators.  This collection is the unit for $\mysquare$ in $\Coll$.  (see \cite{L,B1,B2})
		
		The third collection worth noting is the initial collection $\{\}: \emptyset \rightarrow \Tone$, the unit for the cocartesian monoidal structure on $\Coll$.
		
		One final collection worth noting is given by the globular set map $[id]: \one \rightarrow \Tone$.  Note that among the many cells in $\Tone$ there are the underlying globular cells of identity morphisms created when $\mathcal{T}$ produces the underlying globular set of the free strict $\omega$-category on $\one$.  Among these identities are the following special identities.  There is the underlying 1-cell of the identity on the single vertex in $\one$.  This identity map then has an identity 2-cell that sits over it.  And over this identity 2-cell there is an identity 3-cell that sits above it.  Continuing this process, we see that there is an inclusion of the terminal globular set $\one$ into $\Tone$ whose cells are exactly the iterated identities on the single 0-cell.  This sub-object can be thought of as the globular $\omega$-analogue of the additive identity $0 \in \mathbb{N}$.  
	
\subsection{Globular operads}\label{sec-glob-operads}

		\begin{definition}
			A \emph{globular operad} is a monoid $(\mathcal O, m, e)$ in $\Coll$ with respect to the monoidal product $\mysquare$.  Here $m:{\mathcal O}\mysquare {\mathcal O}\rightarrow {\mathcal O} $ and $e:\one \rightarrow {\mathcal O}$ satisfy the usual associativity and unit conditions.

	\end{definition}
	
		Given a collection $x:\mathcal{X} \rightarrow \Tone$, as always in a closed monoidal category, the endo-hom object $[{\mathcal X},{\mathcal X}]$, is a monoid in $(\Coll, \mysquare, \one \rightarrow \Tone,...)$, and thus a globular operad.  Following \cite{B1,B2} where it is discussed in some detail, we refer to it as {\em the tautological globular operad on $\mathcal X$} and denote it by $Gtaut(\mathcal{X})$.  
	
	\subsection{Algebras for a Globular Operad}
		We follow \cite{B1,B2} in defining algebras for a globular operad by identifying the category $\Glob$ with a full subcategory of $\Coll$ and using the obvious notion of an action of a monoid in a monoidal category on another object in the same category:
		
		\begin{definition}
			A collection $x:\mathcal{X} \rightarrow \Tone$ is said to be \textit{degenerate} if the arity map factors as $x = [id](!_{\mathcal{X}})$, where $[id]: \one \rightarrow \Tone$ is the map which identifies the unique copy of $\one$ in $\Tone$ consisting exclusively of iterated identities on the single 0-cell, and $!_{\mathcal{X}}:\mathcal{X} \rightarrow \one$ is the unique map from $\mathcal{X}$ to the terminal globular set $\one$.
		\end{definition}
		
		Observe that this gives rise to an obvious full inclusion of $\Glob$ into $\Coll$ sending a globular set $\mathcal X$ to the degenerate collection
		${\mathcal X}\stackrel{!}{\rightarrow}\one \stackrel{[id]}{\rightarrow}\Tone$, and any map of globular sets to the obvious commutative triangle.  We can thus make:
		
		\begin{definition}
			An \textit{algebra} $\mathcal{A}$ \textit{for a globular operad} $(\mathcal{O},\circ,e)$ is a globular set $\mathcal{A}$, thought of as a degenerate collection, together with a collection homomorphism $\omega: \mathcal{O} \mysquare \mathcal{A} \rightarrow \mathcal{A}$ which makes the diagrams
			$$\xymatrix{(\mathcal{O} \mysquare \mathcal{O}) \mysquare \mathcal{A} \ar[rr]^{\alpha_{\mathcal{O},\mathcal{O},\mathcal{A}}} \ar[d]_{\circ^{\mathcal{O}} \mysquare \id_{\mathcal{A}}} & & \mathcal{O} \mysquare (\mathcal{O} \mysquare \mathcal{A}) \ar[rr]^{\id_{\mathcal{O}} \mysquare \omega} & & \mathcal{O} \mysquare \mathcal{A} \ar[d]^{\omega} \\
				\mathcal{O} \mysquare \mathcal{A} \ar[rrrr]_{\omega} & & & & \mathcal{A} }$$
			
			$$\xymatrix{ & \mathcal{O} \mysquare \mathcal{A} \ar[ddr]^{\omega} & \\
				\\
				\one \mysquare \mathcal{A} \ar[uur]^{e \mysquare \id_{\mathcal{A}}} \ar[rr]_-{\lambda_{\mathcal{A}}} & & \mathcal{A}}$$
			commute. 
		\end{definition}
		
		We can alternatively use $Gtaut(\mathcal{X})$ to define algebras as representations of a globular operad on a degenerate collection.
		
		\begin{definition}
			Let $o:\mathcal{O} \rightarrow \Tone$ be a globular operad.  An $\mathcal{O}\textit{-module}$ is a globular operad homomorphism $f:\mathcal{O} \rightarrow Gtaut(\mathcal{A})$ for some collection $a:\mathcal{A} \rightarrow \Tone$.  An $\textit{algebra}$ for $\mathcal{O}$ is an $\mathcal{O}$-module such that the collection is degenerate.
		\end{definition}
		
As noted in \cite{B1,B2} the two definitions of algebra agree.

	\subsection{Enriched Categories over Cartesian-Duoidal Categories}
	
	The monoidal structure given by $\mysquare$ and the cartesian product on $\Coll$ endow it with the structure of a duoidal category. Recall from \cite{BM}:
		
		\begin{definition}
			A \textit{duoidal category} is a pseudomonoid in the category $\Mon\Cat_{lax}$ of monoidal categories and lax-monoidal functors. 
			To fix notation for the component part, this is a nonuple $(\mathcal{D}, \otimes, I, \odot, U, \delta, \phi, \theta, \boxplus)$ consisting of a category $\mathcal{D}$, a pair of 2-variable functors $\otimes: \mathcal{D} \times \mathcal{D} \rightarrow \mathcal{D}$ and $\odot: \mathcal{D} \times \mathcal{D} \rightarrow \mathcal{D}$, a pair of unit objects $I$ and $U$, three morphism $\delta: I \rightarrow I \odot I$, $\phi: U \otimes U \rightarrow U$, and $\theta: I \rightarrow U$ in $\mathcal{D}$, and a natural transformation
			$$\boxplus : \otimes (\odot(-,-), \odot (-,-)) \Rightarrow \odot( \otimes(-,-) \otimes(-,-))$$
			given by components
			$$\boxplus_{A,B,C,D}: [A \odot B] \otimes [C \odot D] \rightarrow [A \otimes C] \odot [B \otimes D]$$
			with $A,B,C,D \in Obj(\mathcal{C})$, specifying a lax middle-four interchange law between the product structures.  
			
		\end{definition}
		
		In particular, $\odot$ is a lax monoidal functor over $\otimes$ (by definition) from which it follows that $\otimes$ an oplax monoidal functor over $\odot$.  Moreover, $I$ a comonoid object with respect to $\odot$.  And similarly, $U$ is a monoid object with respect to $\otimes$.
		
		The duoidal categories which will actually be needed herein are of a particular form given by the following well-established result:
		
		 \begin{proposition}
			A monoidal category with finite products has a duoidal structure in which the second monoidal product is the cartesian product.
		\end{proposition}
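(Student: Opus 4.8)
The plan is to keep $(\mathcal{D},\otimes,I)$ as the first monoidal structure and to take the cartesian structure as the second, setting $\odot := \times$ and $U := 1$, the terminal object. Every remaining piece of the nonuple is then forced by the universal properties of the finite products. For the three structure morphisms I would take the canonical arrows: $\theta \colon I \to 1$ and $\phi \colon 1 \otimes 1 \to 1$ are the unique maps to the terminal object, while $\delta \colon I \to I \times I$ is the diagonal $\langle \mathrm{id}_I, \mathrm{id}_I\rangle$. For the interchange I would set
$$\boxplus_{A,B,C,D} := \langle\, \pi_1 \otimes \pi_1,\ \pi_2 \otimes \pi_2 \,\rangle \colon (A \times B)\otimes(C \times D)\ \longrightarrow\ (A\otimes C)\times(B\otimes D),$$
the unique arrow whose composites with the two projections of the target are $\pi_1\otimes\pi_1$ and $\pi_2\otimes\pi_2$, where the $\pi_i$ denote product projections.

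First I would record that $\boxplus$ is natural in all four arguments. Because its codomain is a product, this reduces to the naturality of its two projection-components, each of which is a tensor product of projections and is therefore natural by the naturality of the projections together with the bifunctoriality of $\otimes$.

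The substance of the proof is the verification of the duoidal coherence axioms, and here a single observation does almost all of the work: every morphism occurring as a side of one of these axioms has as its codomain either a value of $\odot = \times$ or the unit $U = 1$. In the first case two parallel such morphisms are equal as soon as their composites with the two projections agree, and in the second case they are automatically equal by terminality of $1$. Postcomposing with projections therefore collapses each of the associativity and interchange axioms to an equation between morphisms built solely from $\otimes$, its associator and unitors, projections, and identities; such an equation holds by Mac Lane coherence for $(\mathcal{D},\otimes,I)$ together with the bifunctoriality of $\otimes$. The axioms involving the units likewise reduce, via projection or terminality, to trivial identities: the comonoid laws for $(I,\delta,\theta)$ with respect to $\times$ are just the canonical comonoid structure borne by every object of a cartesian category, and the monoid laws for $(1,\phi,\theta)$ with respect to $\otimes$ hold because any two parallel arrows into $1$ coincide.

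I expect the only real difficulty to be organizational: there are several interchange and unit diagrams to treat, and care is needed to match the bracketing and orientation conventions of the nonuple. A cleaner route that bypasses this bookkeeping is the doctrinal one: the diagonal $\Delta \colon \mathcal{D} \to \mathcal{D}\times\mathcal{D}$ and the unique functor from $\mathcal{D}$ to the terminal category are strict monoidal for $\otimes$, so by doctrinal adjunction their right adjoints, namely $\times$ and the selection of the terminal object, are lax monoidal; their laxity constraints are precisely $(\boxplus,\delta)$ and $(\phi,\theta)$, and the coherent associativity and unit isomorphisms of the cartesian structure are monoidal natural transformations exhibiting $\mathcal{D}$ as a pseudomonoid in $\Mon\Cat_{lax}$, which is exactly a duoidal category. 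Either route establishes the claim.
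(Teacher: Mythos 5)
Your proof is correct; for comparison, the paper itself offers no argument at all, simply recording the statement as a ``well-established result,'' so there is no authorial proof to match your approach against. What you supply is the standard construction, and both of your routes are sound: the structure maps $\theta$, $\phi$, $\delta$, $\boxplus$ are exactly the canonical ones forced by the universal properties, and your observation that every coherence diagram lands either in a value of $\times$ or in the terminal object does indeed collapse all the axioms to equations checkable on projection components, where they follow from naturality of the projections, the associator and the unitors, and bifunctoriality of $\otimes$. (One small calibration: you invoke ``Mac Lane coherence'' for $(\mathcal{D},\otimes,I)$ at that step, but nothing so strong is needed --- naturality of the structural isomorphisms already closes each diagram once you have postcomposed with projections.) Your doctrinal-adjunction alternative is the cleanest packaging: the diagonal $\Delta\colon \mathcal{D}\to\mathcal{D}\times\mathcal{D}$ and the functor to the terminal category are strong monoidal for $\otimes$, their right adjoints are $\times$ and the terminal object, doctrinal adjunction makes those right adjoints lax monoidal with laxators precisely $(\boxplus,\delta)$ and $(\phi,\theta)$, and the mates of the cartesian associativity and unit isomorphisms are monoidal natural transformations, exhibiting the pseudomonoid in $\Mon\Cat_{lax}$ that the paper's definition requires. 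Either argument would serve as the omitted proof; the elementary one is self-contained, while the doctrinal one explains \emph{why} the construction works and generalizes immediately to any monoidal structure given by a right adjoint to a strong monoidal functor.
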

		
		Following \cite{B1,B2} we will refer to such duoidal categories as cartesian-duoidal categories.  Cartesian-duoidal categories, together with product-preserving lax (resp. strong, strict) monoidal functors with respect to the ``first'' or non-cartesian monoidal structure form a category we denote by $\CartDuoidal_{lax}$ (resp. $\CartDuoidal$, $\CartDuoidal_{strict}$).

		\begin{definition}
			A \textit{category enriched over a cartesian-duoidal category} 
			\[(\mathcal{D}, \otimes, I, \times, \one, \delta, \phi, \theta, \boxplus)\] or simply a $\mathcal{D}$-category, is an enriched category with respect to the monoidal structure $(\mathcal{D}, \otimes, I)$.  A $\mathcal{D}$\textit{-functor} is an enriched functor between two $\mathcal{D}$-categories which is enriched with respect to the same monoidal structure $(\mathcal{D}, \otimes, I)$.  A $\mathcal{D}$\textit{-transformation} is an enriched natural transformation between two $\mathcal{D}$-functors.
		\end{definition}
		
		Note that these enriched categories as such do not use the cartesian structure.  The presence of the cartesian structure will induce a monoidal structure on the category of $\mathcal{D}$-categories. This allows for a notion of monoidal structure on a $\mathcal{D}$-category in which, while composition is enriched with respect to the first monoidal structure, the monoidal product assembles hom-objects using cartesian product.
	
		For example, fix a cartesian-duoidal category $\mathcal{D}$. We denote the 2-category of $\mathcal{D}$-categories,  $\mathcal{D}$-functors and $\mathcal{D}$-natural transformation by  $\mathcal{D}\Cat$.  The cartesian structure on $\mathcal{D}$ induces a monoidal product
		$$\oplus: \mathcal{D}\Cat \times \mathcal{D}\Cat \rightarrow \mathcal{D}\Cat$$
		of $\mathcal{D}$-categories $\mathcal{E}$ and $\mathcal{F}$ which is given as the cartesian product on the sets of objects and, for $A, B \in Obj(\mathcal{E})$ and $X, Y \in Obj(\mathcal{F})$, we have
		$$E \oplus F((A, X),(B, Y) := E(A, B) \times F(X, Y)$$
		as the hom-objects in $\mathcal{E} \oplus \mathcal{F}$.  The unit $\one_{\oplus}$ with respect to this tensor product is the trivial $\mathcal{D}$-category consisting of a single object $*$ and a single hom-object $\one_{\oplus}(*,*) := \one$. 
		
		This allows us to define the following type of $\mathcal{D}$-category.
		
		\begin{definition}
			A \emph{cartesian-duoidal enriched monoidal $\mathcal{D}$-category} $(\mathcal{M}, \diamond, \iota)$ is a pseudomonoid in the monoidal category $(\mathcal{D}\Cat, \oplus, \one_{\oplus})$, where $\mathcal{M} \in Obj(\mathcal{D}\Cat)$, the $\mathcal{D}$-functor $\diamond: \mathcal{M} \oplus \mathcal{M} \rightarrow \mathcal{M}$ is the monoidal product, and $\iota: \one_{\oplus} \rightarrow \mathcal{M}$ is the unit $\mathcal{D}$-functor such that $\diamond$ is associative and unital, with respect to $\iota$, up to $\mathcal{D}$-transformations statifying the usual monoidal coherence conditions.
		\end{definition}

	Strict cartesian-duoidal enriched monoidal categories are monoids, rather than pseudomonoids in $(\mathcal{C}\Cat, \oplus, \one_{\oplus})$ . 
	The globular PROs of \cite{B1,B2} will be example of such.
		
\subsection{Globular PROs}
		Just as classical PROs are a specific type of monoidal category, a globular PRO is a specific type of cartesian-duoidal enriched category.  
		
		\begin{definition}
			A \textit{globular PRO} is a strict cartesian-duoidal enriched monoidal $\Coll$-category $(\mathcal{P}, +, O)$ such that the object set of $\mathcal{P}$ is isomorphic to $\mathbb{N}$, the bifunctor $+: \mathcal{P} \times \mathcal{P} \rightarrow \mathcal{P}$ acts as addition of natural numbers on objects, and the unit $\Coll$-functor $O: \one_{\oplus} \rightarrow \mathcal{P}$ maps $\one$ to the additive identity $0 \in \mathbb{N}$.
		\end{definition}

		\begin{definition}
			A \textit{morphism of globular PROs} between globular PROs $\mathcal{P}$ and $\mathcal{P}'$ is a strict monoidal $\Coll$-functor $(F,\widehat{F}, e): \mathcal{P} \rightarrow \mathcal{P}'$.  
		\end{definition}
		
		Together with the morphisms between them, Globular PROs form a category which we shall here denote by $\GlobPRO$.
		
		It is a theorem of \cite{B1,B2} that $\GlobPRO$ is monadic over the category $\mathbb{N}\Coll\Graph$, the full sub-category of the category of $\Coll$-enriched graphs whose objects have vertex set $\mathbb{N}$.  This is the first of a sequence of monadicity results from \cite{B1,B2}, most of which we report in \ref{mainoldstuff} below, which are proved using Kelly's notion of algebraic colimit \cite{K}.

	\subsection{The Tautological Globular PRO and Algebras}
		As was done above for globular operads, we define an algebra for a globular PRO using a $\textit{tautological globular PRO}$, which we shall denote by $GTaut(\mathcal{A})$ for a globular set $\mathcal{A}$ viewed as a degenerate collection $a:\mathcal{A} \rightarrow \Tone$.  
		
	As for all globular PROs, $GTaut(\mathcal{A})$ has as its set of objects $\mathbb{N}$. The hom-objects $GTaut(\mathcal{A})(n,m)$ are given by $[\mathcal{A}^n,\mathcal{A}^m]$, the internal hom-object of the closed structure corresponding to the product $\mysquare$ in $\Coll$.  The reader is referred to \cite{B1,B2} for a detailed exposition of the composition and monoidal product on $GTaut(\mathcal{A})$
	
	With this, we can give the succinct definition of an algebra for a globular PRO:
	
	\begin{definition}
	For a globular PRO $\mathcal{P}$ {\em a $\mathcal{P}$-algebra} is a globular set $\mathcal{A}$, viewed as a degenerate collection, equipped with a PRO homomorphism $\phi:\mathcal{A}\rightarrow GTaut(\mathcal{A})$.
	\end{definition}
	
	In \cite{B1,B2} an alternative, equivalent definition, in terms of explicit actions of the hom-objects of $\mathcal{P}$ on cartesian powers of $\mathcal{A}$ is also presented.
		
\subsection{PRO globularization and weakening \label{mainoldstuff}}

One of the main intermediate results in \cite{B1,B2} was the construction, given a PRO $P$ in $\Set$, of a globular PRO $\mathcal{P}$ whose algebras (in $\Glob$) are precisely strict $\omega$-categories equipped with a $P$-algebra structure (with operations given by strict $\omega$-functors).  

The construction is simple, though verification that the resulting structure was a PRO with the desired properties was surprisingly involved.  We do not sketch the proofs, but refer the reader to \cite{B1,B2}.

The hom-objects of $\mathcal{P}$ are given by  $\mathcal{P}(n,m) = P(n,m)\cdot \id$, the copower of the terminal collection by the corresponding hom-set of $P$.
The composition (resp. monoidal product on hom-objects) is given by the copower of the composition on $\id$ as the terminal globular operad (resp. the cartesian unitor) by set maps which reindex an iterated copower by a pair of hom-sets as a copower by their product and then apply composition in the original PRO (resp. include into another hom-set by the monoidal structure in the original PRO).  The reader should note this is {\em not} the usual construction of enriched category theory in which the hom-objects of an enrichment are constructed as copowers of the monoidal identity.  

The globular PRO whose algebras are weak $\omega$-categories with operations corresponding to those of the PRO $P$ but with all equations weakened to weak $\omega$-equivalences is constructed in \cite{B1,B2} from $\mathcal{P}$ by a process analogous to that used in \cite{L} to give the definition of weak $\omega$-categories we have adopted here.

We recall the elements and key results of the construction from \cite{B1,B2}.
	
	\begin{definition}
		Given a globular set $(\mathcal{X},s_{\mathcal{X}},t_{\mathcal{X}})$, two $n$-cells $\nu^-, \nu^+ \in \mathcal{X}$ are \textit{parallel} if $s_{\mathcal{X}}(\nu^-) = s_{\mathcal{X}}(\nu^+)$ and $t_{\mathcal{X}}(\nu^-) = t_{\mathcal{X}}(\nu^+)$.  All zero dimensional cells in $\mathcal{X}$ are parallel.
	\end{definition}
	
	Now, given a map $f:{\mathcal{X}} \rightarrow \mathcal{Y}$ of globular sets, for each $n$-cell $\nu \in \mathcal{Y}_n$ with $n > 0$ (we call such a cell an ``non-zero $n$-cell'') we may consider the set
	\begin{eqnarray*}\text{Par}_f(\nu) & := & \{(\rho^-,\rho^+) \in \mathcal{X}_{n-1} \times \mathcal{X}_{n-1} | \rho^- \text{ and } \rho^+ \text{ are parallel},\\ & & \hspace*{3em}f(\rho^-) = s_{\mathcal{Y}}^n(\nu), f(\rho^+) = t_{\mathcal{Y}}^n(\nu) \}
	\end{eqnarray*}
	of pairs of parallel $(n-1)$-cells in $\mathcal{X}$ that map via $f$ to the boundary of $\nu$ in $\mathcal{Y}$.
	
	\begin{definition}
		Given a map $f:\mathcal{X} \rightarrow \mathcal{Y}$ of globular sets, a $\textit{contraction}$ $(f:\mathcal{X} \rightarrow \mathcal{Y}, \kappa^{f})$ on $f$ is a family of maps $\kappa^{f} = \{\kappa_{\nu}:\text{Par}_f(\nu) \rightarrow \mathcal{X}_n \}$, indexed by the nonzeros $n$-cell $\nu \in \mathcal{Y}_n$, such that for each nonzero $\nu \in \mathcal{Y}$
		$$s_{\mathcal{X}}^n(\kappa_{\nu}(\rho^-,\rho^+)) = \rho^-$$
		$$t_{\mathcal{X}}^n(\kappa_{\nu}(\rho^-,\rho^+)) = \rho^+$$
		$$f(\kappa_{\nu}(\rho^-,\rho^+)) = \nu$$
		for every pair $(\rho^-,\rho^+) \in \text{Par}_f(\nu)$.
	\end{definition}

\cite{L} uses the following to give his definition of weak $\omega$-categories:	
	
	\begin{definition}
		A $\textit{contraction structure on a globular operad}$ $a: \mathcal{A} \rightarrow \Tone$ is a contraction on the unique map from $a$ to the terminal collection $\Tone \rightarrow \Tone$.  In particular, it is a contraction on the arity map $a$.
	\end{definition}
	
\cite{B1,B2} extended this to globular PROs using:

	\begin{definition}
		A $\textit{contraction structure on an}$ $\mathbb{N}\Coll\textit{-graph homomorphism}$ is a map of $\mathbb{N}\Coll$-graphs $F:\mathbf{G} \rightarrow \mathbf{H}$ such that each component $F_{n,m}:\mathbf{G}(n,m) \rightarrow \mathbf{H}(n,m)$, all of which are maps of globular sets, comes equipped with a specified contraction.
	\end{definition}

For a fixed $\mathbb{N}\Coll$-graph $\bf G$, \cite{B1,B2} showed that the category $\Cont(\sfrac{\mathbb{N}\Coll\Graph}{\mathbf{G}})$ of $\mathbb{N}\Coll$-graphs with contraction over $\bf G$, and maps of $\mathbb{N}\Coll$-graphs preserving the contraction in the obvious sense, is monadic over the slice category $\sfrac{\mathbb{N}\Coll\Graph}{\mathbf{G}}$.

	\begin{definition}
		A $\textit{contraction structure on a globular PRO homomorphism}$ is a map of globular PROs $F:\mathcal{P}^{\prime} \rightarrow \mathcal{P}$ such that its underlying map of $\mathbb{N}\Coll$-graphs is equipped with a contraction.
	\end{definition}
	
	The final monadicity result of \cite{B1,B2}, the proof of which uses both Kelly's algebraic colimits and a technical result on the monadicity of slice categories, is the following:

	\begin{theorem}
		The category $\Cont(\sfrac{\GlobPRO}{\mathcal{P}})$ of globular PROs with contraction over a fixed globular PRO $\mathcal{P}$, is monadic over $\sfrac{\mathbb{N}\Coll\Graph}{U(\mathcal{P})}$.
	\end{theorem}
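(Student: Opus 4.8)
The plan is to exhibit $\Cont(\sfrac{\GlobPRO}{\mathcal{P}})$ as the category of algebras for a \emph{coproduct} of two monads on the base $\mathcal{B} := \sfrac{\mathbb{N}\Coll\Graph}{U(\mathcal{P})}$, one monad encoding the globular-PRO structure over $\mathcal{P}$ and the other encoding a contraction over $U(\mathcal{P})$, and then to invoke Kelly's transfinite (algebraic) colimit machinery \cite{K} to guarantee that this coproduct exists and is itself monadic with the expected algebras.

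First I would produce a monad on $\mathcal{B}$ whose algebras are exactly $\sfrac{\GlobPRO}{\mathcal{P}}$. Since $U\colon \GlobPRO \to \mathbb{N}\Coll\Graph$ is monadic by the cited theorem of \cite{B1,B2}, say with left adjoint $F$ and monad $T = UF$, I would appeal to the slice-transfer principle: for a monadic $U\colon \mathcal{A} \to \mathcal{B}$ and any object $A$, the induced functor $\sfrac{\mathcal{A}}{A} \to \sfrac{\mathcal{B}}{UA}$ is again monadic, its left adjoint sending $b\colon B \to UA$ to the transpose $\tilde b\colon FB \to A$ under $F \dashv U$, and the induced monad $T_{\mathcal{P}}$ being the evident ``slice'' of $T$. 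The verification is that Beck's conditions pass to the slice: the forgetful functors from both slices create the connected limits and reflexive coequalizers in question and reflect isomorphisms, so the inherited Beck conditions from $U$ give monadicity. This is the ``technical result on the monadicity of slice categories''; taking $A = \mathcal{P}$ yields a monad $T_{\mathcal{P}}$ on $\mathcal{B}$ with $T_{\mathcal{P}}$-algebras $\simeq \sfrac{\GlobPRO}{\mathcal{P}}$ over $\mathcal{B}$.

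Second, I would recall that the earlier result on contractions over a fixed $\mathbb{N}\Coll$-graph $\mathbf{G}$, applied with $\mathbf{G} = U(\mathcal{P})$, furnishes on the \emph{same} base $\mathcal{B}$ a monad $\mathbb{C}$ whose algebras are precisely the $\mathbb{N}\Coll$-graphs over $U(\mathcal{P})$ equipped with a contraction. By definition a contraction structure on a globular-PRO homomorphism is nothing more than a contraction on its underlying $\mathbb{N}\Coll$-graph map, with no compatibility imposed between the contraction and PRO composition or the monoidal product; hence an object of $\Cont(\sfrac{\GlobPRO}{\mathcal{P}})$ is exactly an object of $\mathcal{B}$ carrying \emph{both} a $T_{\mathcal{P}}$-algebra structure and a $\mathbb{C}$-algebra structure, with morphisms those preserving both. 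This identifies $\Cont(\sfrac{\GlobPRO}{\mathcal{P}})$ with the category of algebras for the coproduct $T_{\mathcal{P}} + \mathbb{C}$ in the category of monads on $\mathcal{B}$, whose algebras are classically the objects bearing both structures and nothing more. To construct that coproduct I would observe that $\Coll$ is a presheaf topos, hence locally presentable, so $\mathbb{N}\Coll\Graph$ and its slice $\mathcal{B}$ are cocomplete and locally presentable; one then checks that $T_{\mathcal{P}}$ and $\mathbb{C}$ are accessible, whereupon Kelly's transfinite construction \cite{K} produces $T_{\mathcal{P}} + \mathbb{C}$ as an algebraic colimit and simultaneously exhibits its algebra category as monadic over $\mathcal{B}$. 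Composing with the identification above gives the theorem.

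The main obstacle I expect is verifying Kelly's hypotheses: concretely, establishing a boundedness/rank condition for $\mathbb{C}$ and for the sliced monad $T_{\mathcal{P}}$ so that the transfinite construction of the coproduct converges, since the globular-PRO monad is assembled from the composition tensor product $\mysquare$, whose interaction with filtered colimits must be controlled to secure accessibility. A secondary and more routine point is confirming that the coproduct's algebras are genuinely the objects equipped with both structures independently — which is exactly where the absence of any coherence condition tying the contraction to the PRO operations is used.
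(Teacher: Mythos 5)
Your proposal is correct and follows essentially the same route the paper indicates for this result: the paper defers the details to \cite{B1,B2} but states that the proof rests on exactly the two ingredients you identify, namely a slice-transfer of monadicity (giving the monad $T_{\mathcal{P}}$ on $\sfrac{\mathbb{N}\Coll\Graph}{U(\mathcal{P})}$) and Kelly's algebraic colimits (realizing $\Cont(\sfrac{\GlobPRO}{\mathcal{P}})$ as the algebras for a coproduct of that monad with the contraction monad, using that the definition imposes no compatibility between the contraction and the PRO operations). Your flagged obstacle --- verifying the rank/accessibility hypotheses so that Kelly's transfinite construction converges --- is indeed where the real technical work lies.
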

	
	From this, as shown in \cite{B1,B2}, it follows that there is an initial PRO with contraction over any given globular PRO, and in particular over the PRO globularization $\mathcal{P}$ of a PRO $P$ in $\Set$.  It is this initial PRO with contraction, which we denote $\ip$ whose algebras Bressie proposes as fully weakened $\omega$-categorifications of $P$-algebras.
	
	It follows from the fact this PRO is initial among globular PROs with contraction over $\mathcal{P}$ and the fact that algebras for a globular PRO are algebras for any globular PRO which maps to it, that to construct models of the weak $\omega$-categorification of an algebraic theory, it suffices to construct algebras for {\em any} globular PRO with contraction over the globularizaiton of the PRO giving the theory.    
	
We now turn to a setting which will supply the needed globular PROs with contraction to construct the examples of weakly $\omega$-categorified groups and quandles, the latter of which were the original motivation for \cite{B1,B2}.

\section{Categories of Structured Spaces \label{spaces}}

We recall several definitions that are perhaps less well-known than they should be (cf. \cite{S}):

\begin{definition}
A topological space $X$ is {\em weakly Hausdorff} if for all $u:K\rightarrow X$ a continuous map from a compact space, $u(X)$ is closed in $X$.
\end{definition}

It is easy to see that any Hausdorff space is weakly Hausdorff.

\begin{definition}
A topological space $X$ is {\em compactly generated} if its subsets are closed if and only if their inverse images under all continuous maps from compact spaces are closed.  (Sets satisfying this latter condition in any topological space are termed {\em $k$-closed}.)
\end{definition}

It is an easy exercise to see that replacing the closed sets of a space $X$ with the sets which are $k$-closed results in the closed sets of a finer topology on the same underlying set, which is compactly generated, giving rise to a coreflection, usually denoted $k$ from the category of topological spaces to the full subcategory of compactly generated spaces, and moreover that this coreflection preserves weak Hausdorffness.

In what follows, all spaces will be compactly generated weak Hausdorff spaces.  We denote the category of compactly generated weak Hausdorff spaces by {\bf Sp}.  {\bf Sp} is, moreover, compact closed, with an internal hom ${\bf Sp}(Y,X)$ given by
$k(C^0(Y,X))$ where $C^0(Y,X)$ is given the compact-open topology.

Because compactly generated spaces are a coreflective full subcategory, they are closed under taking colimits in the category of topological spaces, and thus a CW complex is a compactly generated space (and moreover Hausdorff).  Thus the geometric realization functor from {\bf Glob} may be regarded as taking values in {\bf Sp}.

A few other observations:  both the inclusion and the coreflection restrict to the identity functor on the full subcategory of compact Hausdorff spaces, {\bf CpctHaus}.  The binary product in {\bf Sp} coincides with that the ordinary product topology if either of the spaces is locally compact, or if both are first countable (see \cite{S}).

\begin{definition}

{\em A category of structured spaces} $\mathcal{J}$ is a category enriched over {\bf Sp}, with an enriched underlying functor $U:{\mathcal{J}}\rightarrow {\bf Sp}$, with enriched copowers by compact Hausdorff spaces:
\[ {\mathcal{J}}(\Sigma\bullet X,Y) \cong {\bf Sp}(\Sigma, {\mathcal{J}}(X,Y))\]

\noindent whose underlying category has all finite colimits, and in which the copower functor $\bullet: {\bf CpctHaus}\times {\mathcal{J}} \rightarrow {\mathcal{J}}$ is left exact in both variables.

\end{definition}

\begin{definition}
For any category of structured spaces $\mathcal{J}$, the {\em homotopy category} $ho{\mathcal{J}}$ is the category with the same objects as $\mathcal{J}$ and hom sets given by $ho{\mathcal{J}}(X,Y) = \pi_0({\mathcal{J}}(X,Y))$. Its opposite category $ho{\mathcal{J}}^{op}$ has a monoidal structure induced by the binary coproduct on $\mathcal{J}$, with the initial object of $\mathcal{J}$ as its monoidal identity.
\end{definition}

Examples of categories of structured spaces include {\bf Sp}, ${\bf Sp*}$, the category of pointed (cgwH) spaces and point-preserving continuous maps, and ${\bf JP}$, the category of Joyce pairs.  We explain the structure of this last more fully:

The objects of ${\bf JP}$ are triples $(A,B,*)$ with $B$ a space, $A$ a subspace of $B$ and $*$ a point of $B\setminus A$. (Recall space means compactly generated weak Hausdorff space.) An arrow of ${\bf JP}$ from
$(A,B,*)$ to $(C,D,\dagger)$ is a continuous function $f:B\rightarrow D$ such that $f^{-1}(C) = A$ and $f(*) = \dagger$.

For a compact Hausdorff space $K$, and a Joyce pair $(A,B,*)$, the copower
$K\bullet (A,B,*)$  is $(K\times A, K\times B / (K\times\{*\}), [*])$.  Binary coproducts are given by $(A,B,*) \coprod (C,D,*) = (A\coprod C, B\vee D, [*])$, while the coequalizer of $f,g:(A,B,*)\rightarrow (C,D,*)$ is given by
\[({\rm coeq}(f|_A:A\rightarrow C,g|_A:A\rightarrow C), {\rm coeq}(f,g),[*])\] 
\noindent where the coequalizers in the components are taken in {\bf Sp}.  Note that the condition on the inverse image of the subspace implies that the first coequalizer may be canonically identified with a subspace of the second. In all three cases $[*]$ denotes the equivalence class of the original base point(s) in the quotient space.

We then have the following proposition, which gives a unified treatment of how the fundamental group and the fundamental quandle of \cite{J} arise from the fact that the circle in {\bf Sp*} (resp. the lollipop in {\bf JP}) is a homotopy cogroup (resp. homotopy coquandle). 

\begin{proposition}
For any object $X$ in a category of structured spaces $\mathcal{J}$, for each object $Y$ of $\mathcal{J}$, the hom object of the homotopy category, $ho{\mathcal{J}(X,Y)}$ is a model of the theory of $(X, ho{\mathcal{J}}^{op})$, where $ho{\mathcal{J}}^{op}$ has the monoidal structure induced by the coproduct in $\mathcal{J}$.
\end{proposition}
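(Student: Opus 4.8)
The plan is to realize the required model as the image of a single strong monoidal functor, namely the representable $ho{\mathcal J}(-,Y)$, and then invoke the general principle that strong monoidal functors transport tautological PROs. Write $Z := ho{\mathcal J}(X,Y)$ and give $\Set$ its cartesian monoidal structure. By definition the theory of $(X, ho{\mathcal J}^{op})$ is the PRO $Taut_{ho{\mathcal J}^{op}}(X)$, so a model of it in $\Set$ is exactly a set $Z$ together with a PRO homomorphism $Taut_{ho{\mathcal J}^{op}}(X)\to Taut_{\Set}(Z)$. Unwinding the monoidal structure on $ho{\mathcal J}^{op}$, whose tensor is the coproduct of ${\mathcal J}$, the hom-set $Taut_{ho{\mathcal J}^{op}}(X)(n,m) = ho{\mathcal J}^{op}(^{\otimes n}X, X^{\otimes m})$ is just $ho{\mathcal J}(\coprod_m X, \coprod_n X)$, so the homomorphism to be built must send a homotopy class $g:\coprod_m X \to \coprod_n X$ to a function $Z^n \to Z^m$, namely precomposition with $g$ under the identifications $ho{\mathcal J}(\coprod_k X, Y)\cong Z^k$.

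First I would record the formal lemma that any strong monoidal functor $F:({\mathcal C},\otimes,I)\to({\mathcal D},\boxtimes,J)$ induces, for each object $X$, a PRO homomorphism $Taut_{\mathcal C}(X)\to Taut_{\mathcal D}(FX)$: in degree $(n,m)$ it sends $f:{}^{\otimes n}X\to X^{\otimes m}$ to $Ff$ conjugated by the invertible comparison maps $^{\otimes n}FX\xrightarrow{\sim}F(^{\otimes n}X)$ and $F(X^{\otimes m})\xrightarrow{\sim}FX^{\otimes m}$. Preservation of identities and of the object map ${\mathbbm 1}_{\mathbb N}$ is immediate; preservation of the padded composition $\lceil fg\rceil$ follows from functoriality of $F$ together with its coherence with the associators and unitors, which supply precisely the padding isomorphisms $\alpha_i$; and preservation of the additive structure $\lceil f\otimes g\rceil$ follows from monoidal naturality of the comparison maps. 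This step is entirely formal.

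Next I would apply this lemma to $F = ho{\mathcal J}(-,Y):ho{\mathcal J}^{op}\to\Set$, for which $FX = Z$. The content of this step is that $F$ is strong monoidal from $(ho{\mathcal J}^{op},\otimes,0)$, with $0$ the initial object of ${\mathcal J}$, to $(\Set,\times,\{*\})$, i.e.\ that it carries the coproduct-induced tensor to cartesian product and the unit to a point. For this I would establish the enriched universal property of finite coproducts in ${\mathcal J}$: that the canonical comparison ${\mathcal J}(\coprod_{i}A_i,Y)\to\prod_i{\mathcal J}(A_i,Y)$ is an isomorphism in {\bf Sp} (a homeomorphism of cgwH mapping spaces), and then apply $\pi_0=ho$, using that $\pi_0$ preserves finite products of cgwH spaces since path components of a product are the product of path components. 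The empty case gives ${\mathcal J}(0,Y)\cong *$ and hence $F(0)=\{*\}$, supplying the unit comparison.

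The main obstacle is precisely this enriched statement: the definition of a category of structured spaces guarantees only that the \emph{underlying} category has finite colimits, so the ordinary universal property yields merely a bijection of underlying hom-sets, whereas controlling $\pi_0$ requires the comparison to be a homeomorphism of hom-objects in {\bf Sp}. I would handle this by checking compatibility of the copower and coproduct constructions with the {\bf Sp}-enrichment --- using left-exactness of the copower functor together with the explicit descriptions of coproducts in the examples {\bf Sp}, {\bf Sp*}, and {\bf JP} (wedges and the Joyce-pair coproduct) --- so that the natural continuous bijection ${\mathcal J}(\coprod_i A_i, Y) \to \prod_i {\mathcal J}(A_i, Y)$ admits a continuous inverse in cgwH spaces. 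Once this homeomorphism is in hand, the strong monoidality of $F$ follows, and the lemma of the second paragraph then endows $Z = ho{\mathcal J}(X,Y)$ with its structure as a model of the theory of $(X, ho{\mathcal J}^{op})$.
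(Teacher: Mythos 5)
Your proposal is correct and follows essentially the same route as the paper: both rest on the identification $ho{\mathcal J}(\coprod_n X, Y)\cong ho{\mathcal J}(X,Y)^n$ obtained from the universal property of coproducts together with the fact that $\pi_0$ preserves products, and both let homotopy classes of maps $\coprod_m X\to\coprod_n X$ act by ($\pi_0$ of) composition --- your ``strong monoidal functors transport tautological PROs'' lemma is just a tidier packaging of the paper's direct check that this action respects composition and the monoidal product. Your extra care about whether the coproduct's universal property holds at the level of {\bf Sp}-enriched hom-objects rather than merely underlying hom-sets addresses a point the paper's one-line argument glosses over, and is a refinement rather than a divergence.
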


\begin{proof}
This follows readily from the universal property of coproducts and the fact that $\pi_0$ preserves products.  First, 
\[\textstyle ho{\mathcal{J}}(X,Y)^n = \pi_0{\mathcal{J}}(X,Y)^n \cong \pi_0({\mathcal{J}}(X,Y)^n)
\cong \pi_0({\mathcal{J}}(\coprod_n X,Y)) . \]

From this it follows that $ho{\mathcal{J}}(\coprod_m X, \coprod_n X)$ acts by $\pi_0$ of composition as operations of artity $n$ and coarity $m$, and that this operation respects both composition and the monoidal structure, so that $ho{\mathcal{J}}(X,Y)$ is a model for the theory of $(X, ho{\mathcal{J}}^{op})$.
\end{proof}

\section{Weakly $\omega$-categorified algebras}

Our main theorem will formalize the idea that, under suitable hypotheses, if the homotopies in the previous theorem are not used to give an equivalence relation, but are instead used as 1-arrows, with homotopies between them as 2-arrows, and so forth, the resulting structure will give a model of the weakly $\omega$-categorified version of the theory of $(X, ho{\mathcal{J}}^{op})$ in the sense defined in \cite{B1, B2}.  To make this precise we will need more definitions.

\begin{definition}
The {\em standard coglobular space} $D^\bullet$ is the sequence of spaces and pairs of inclusions

\[ D^0 \begin{matrix} \stackrel{s_0}{\rightarrow} \\ \stackrel{t_0}{\rightarrow} \end{matrix} D^1 
\begin{matrix} \stackrel{s_1}{\rightarrow} \\ \stackrel{t_1}{\rightarrow} \end{matrix} D^2
\begin{matrix} \stackrel{s_2}{\rightarrow} \\ \stackrel{t_2}{\rightarrow} \end{matrix} D^3
\begin{matrix} \stackrel{s_3}{\rightarrow} \\ \stackrel{t_3}{\rightarrow} \end{matrix} D^4
\begin{matrix} \stackrel{s_4}{\rightarrow} \\ \stackrel{t_4}{\rightarrow} \end{matrix}\ldots
\]

\noindent where $D^0$ is {\bf 1}, $D^n = \{ (x_1,\ldots x_n) | x_1^2 + \ldots + x_n^2 \leq 1 \} 
\subset {\mathbb R}^n$,  for $n > 0$, $s_0 = -1$, $t_0 = 1$ and $s_n(x_1,\ldots x_n) = (x_1,\ldots, x_n, 
-\sqrt{1 - x_1^2 - \ldots - x_n^2}$, $t_n(x_1,\ldots x_n) = (x_1,\ldots, x_n, \newline \sqrt{1 - x_1^2 - \ldots - x_n^2})$ for $n>0$.

\end{definition}

Observe that the necessary equations are satisfied so that $D^\bullet$ may be regarded as a covariant functor from the globe category to ${\bf CompHaus}$, which we regard as a full subcategory of {\bf Sp}, when appropriate.

The functor giving copowers by compact spaces on $\mathcal{J}$, a category of structured spaces, then gives us that for any object $X$ in $\mathcal{J}$, $D^\bullet \bullet X$ is a coglobular object in $\mathcal{J}$ (or a globular object in $\mathcal{J}^{op}$).  From this, it follows that for any object $Y$ in $\mathcal{J}$, $U({\mathcal{J}}{(D^\bullet \bullet X, Y))}$ is a globular set, where $U$ denotes the underlying set functor.

Our main theorem is then

\begin{theorem}
For $X$ any object in a category of structured spaces, if for all $n,m \in {\mathbb N}$ the path components of ${\mathcal{J}}(\coprod_n X, \coprod_m X)$ are contractible, then for any object $Y$ of $\mathcal{J}$, the globular set $\mathcal{J}{(D^\bullet \bullet X, Y)}$ admits a weak $\omega$-category structure in the sense of \cite{L}, and moreover the structure of a model of the weak $\omega$-categorification of the theory of $(X, ho{\mathcal{J}}^{op})$.
\end{theorem}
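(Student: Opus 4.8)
The plan is to reduce to the monadicity machinery of Section~\ref{mainoldstuff}. By the reduction recorded there, to endow $\mathcal A := {\mathcal J}(D^\bullet\bullet X, Y)$ with the structure of a model of the weak $\omega$-categorification of the theory of $(X, ho{\mathcal J}^{op})$ it suffices to produce \emph{some} globular PRO with contraction $F:\mathcal Q\to\mathcal P$ over the globularization $\mathcal P$ of that theory's PRO $P$, together with a $\mathcal Q$-algebra structure on $\mathcal A$, i.e. a morphism of globular PROs $\mathcal Q\to GTaut(\mathcal A)$; restricting along the unique map $\ip\to\mathcal Q$ then exhibits $\mathcal A$ as an $\ip$-algebra, whose underlying globular operad-with-contraction is a weak $\omega$-category in the sense of \cite{L}. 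By the Proposition of Section~\ref{spaces}, $P(n,m)=\pi_0{\mathcal J}(\coprod_m X,\coprod_n X)$, so the spaces named in the hypothesis are precisely the operation spaces $\mathbf E(n,m):={\mathcal J}(\coprod_m X,\coprod_n X)$ of the $\Sp$-enriched PRO $\mathbf E$ with $\pi_0\mathbf E=P$ (the index swap is harmless, as the hypothesis is stated for all $n,m$), and each such space has contractible path components.

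I would build $\mathcal Q$ by globularizing $\mathbf E$ with the coglobular space $D^\bullet$. For a $k$-dimensional globular pasting scheme $\pi$ (a $k$-cell of $\Tone$) let $D^\pi$ be its realization as a glued complex of disks; since globular pasting schemes are disk-like, $(D^\pi,\partial D^\pi)\cong(D^k,S^{k-1})$, with the boundary sphere split into the realizations of the source and target faces of $\pi$. Set the arity-$\pi$ cells of the hom-collection to be
\[ \mathcal Q(n,m)_\pi := U{\mathcal J}\bigl(D^\pi\bullet\textstyle\coprod_m X,\ \coprod_n X\bigr), \]
with source and target induced by the coglobular face inclusions of $D^\bullet$ and the arity map sending such a cell to $\pi$. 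Composition and the additive product are inherited from $\mathbf E$ together with the gluing of pasted disks, and the verification that this assembles into a strict cartesian-duoidal enriched monoidal $\Coll$-category with object set $\mathbb N$, i.e. a globular PRO, is bookkeeping of the kind carried out in \cite{B1,B2}, driven by strict associativity in $\mathbf E$ and the standard identities among disk inclusions. Collapsing each connected $D^\pi$-family to its path component (an operation of $P$) and recording the arity $\pi$ defines the morphism $F:\mathcal Q\to\mathcal P=P(-,-)\cdot\id$.

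The contraction is the heart of the argument and the step where the hypothesis enters. A nonzero $k$-cell $\nu$ of $\mathcal P(n,m)$ is a pair $(o,\pi)$ with $o$ an operation and $\pi$ a $k$-pasting-scheme; a parallel pair $(\rho^-,\rho^+)\in\text{Par}_F(\nu)$ consists of cells of $\mathcal Q(n,m)$ over the source and target of $\pi$ that share a common boundary and both lie in the path component $o$. Gluing $\rho^-$ and $\rho^+$ along that boundary yields a single map out of $(\partial D^\pi)\bullet\coprod_m X\cong S^{k-1}\bullet\coprod_m X$ into $\coprod_n X$; under the copower--hom adjunction ${\mathcal J}(\Sigma\bullet W,Z)\cong\Sp(\Sigma,{\mathcal J}(W,Z))$ for compact Hausdorff $\Sigma$ (here $\Sigma=S^{k-1}$ and $D^k$), extending it over $D^\pi\bullet\coprod_m X\cong D^k\bullet\coprod_m X$ is exactly the problem of filling a map $S^{k-1}\to\mathbf E(n,m)$ whose image lies in the single component $o$ to a map $D^k\to\mathbf E(n,m)$. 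Such a filler exists precisely because that component is contractible, which is the hypothesis; choosing one for each parallel pair yields the family $\kappa^F_\nu$, and the three defining equations of a contraction hold by construction, since the two hemispheres restrict to $\rho^-$ and $\rho^+$ and the extension stays in the component $o$ over the cell $\nu=(o,\pi)$. Hence $F:\mathcal Q\to\mathcal P$ is a globular PRO with contraction.

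Finally, the $\mathcal Q$-algebra structure on $\mathcal A$ is given by precomposition. Using $D^\pi\bullet\coprod_m X\cong\coprod_m(D^\pi\bullet X)$ and the fact that $\mathcal J(-,Y)$ sends the colimits defining the pasted disks to the limits computing $\pi$-shaped diagrams of cells of $\mathcal A$, each cell $g\in\mathcal Q(n,m)_\pi$ determines, by precomposition, a map from $\pi$-shaped diagrams of cells of $\mathcal A$ to single cells, natural in the arities; in the formalism of \cite{B1,B2} these are exactly the data of a morphism $\mathcal Q\to GTaut(\mathcal A)$, the compatibilities with $\mysquare$-composition, the additive product, and sources and targets being formal consequences of the functoriality of $\mathcal J(-,Y)$ and the coglobular identities of $D^\bullet$. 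Restricting along $\ip\to\mathcal Q$ then makes $\mathcal A$ a model of the weak $\omega$-categorification of the theory of $(X, ho{\mathcal J}^{op})$. I expect the genuine obstacles to be two essentially technical points: checking that the disk-globularized $\mathcal Q$ really is a strict cartesian-duoidal enriched monoidal $\Coll$-category and that $F$ respects all of this structure, and verifying the homotopy-invariance identifications (e.g. $D^k\bullet\coprod_m X\simeq\coprod_m X$) used to match the operation spaces with the spaces $\mathcal J(\coprod_n X,\coprod_m X)$ of the hypothesis. The contraction itself, by contrast, falls out cleanly from contractibility once the copower--hom adjunction is in place.
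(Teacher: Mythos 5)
Your overall strategy coincides with the paper's: reduce to producing \emph{some} globular PRO with contraction over $\mathcal{P}$, the globularization of $P = Taut(X, ho{\mathcal{J}}^{op})$ with $P(n,m)=\pi_0{\mathcal{J}}(\coprod_m X,\coprod_n X)$; obtain the contraction by gluing a parallel pair into a sphere landing in a single path component and filling it by contractibility via the copower--hom adjunction; and let the PRO act on ${\mathcal{J}}(D^\bullet\bullet X,Y)$ by composition in $\mathcal{J}$. Those steps match the paper almost exactly (the paper additionally establishes the weak $\omega$-category structure directly, by identifying ${\mathcal{J}}(D^\bullet\bullet X,Y)$ with the fundamental $\infty$-groupoid of ${\mathcal{J}}(X,Y)$, rather than extracting it from the $\ip$-algebra structure).

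The gap is in your definition of the hom-collections, and it surfaces exactly where you defer to ``bookkeeping'': composition. You take a $k$-cell of $\mathcal{Q}(n,m)$ over $\pi$ to be a map $D^\pi=|\pi|\to{\mathcal{J}}(\coprod_m X,\coprod_n X)$. Now compose a cell over $\pi$ with a $\pi$-indexed coloring by cells over schemes $\tau_\gamma$: the coloring glues to a map out of $|\tau|=\mathrm{colim}_\gamma|\tau_\gamma|$, while the outer cell is a map out of $|\pi|=\mathrm{colim}_\gamma D^{\dim\gamma}$, so before you can pair the two and apply composition in $\mathcal{J}$ you need a comparison $|\tau|\to|\pi|$ collapsing each $|\tau_\gamma|$ onto the corresponding geometric globe of $|\pi|$. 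No such map is induced by a map of pasting schemes --- there is in general no globular-set map from $\tau_\gamma$ to the single globe of its dimension, since an interior face of $\tau_\gamma$ would have to be sent to both the source and the target of that globe --- so these collapses must be chosen by hand, strictly compatibly with iterated substitution of pasting schemes (a globular PRO is a \emph{strict} enriched monoidal $\Coll$-category, so associativity must hold on the nose). This coherence problem is precisely what the paper's encoding avoids: its $k$-cells over $\pi$ are spans $|\pi|\xleftarrow{\phi}D^k\xrightarrow{\psi}{\mathcal{J}}(\coprod_m X,\coprod_n X)$ with $\phi$ source- and target-preserving, so the needed comparison is carried as part of the data of each cell; composition is ``glue the colored spans, then precompose with $\phi$ and compose in $\mathcal{J}$,'' and strict associativity follows from that of colimits and of composition in $\mathcal{J}$. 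Your contraction and action arguments go through essentially unchanged once the cells are re-encoded as spans, so the repair is local, but as written $\mathcal{Q}$ does not yet have a well-defined composition.
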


\begin{proof}
That $\mathcal{J}{(D^\bullet \bullet X, Y)}$ has a weak $\omega$-category structure is immediate once it is observed that it can be identified with the fundamental $\infty$-groupoid of ${\mathcal{J}}(X,Y)$ via the enriched adjunction defining the copower by compact Hausdorff spaces.

To see that it is a model of the weakly $\omega$-categorified theory of $(X, ho{\mathcal{J}}^{op})$ we first construct a globular PRO with contraction over $\mathcal{P}$, the strict PRO globularization of $P = Taut(X, ho{\mathcal{J}}^{op})$.
We denote this globular PRO with contraction over $\mathcal{P}$ by $\wkp$.  The proof is then completed by giving an action of $\wkp$ on the globular set 
$\mathcal{J}{(D^\bullet \bullet X, Y)}$.

To construct $\wkp$ with its globular PRO homomorphism to $\mathcal{P}$ we must first consider the structure of $\mathcal{P}$.  Its hom collections are the copowers in {\bf Coll} of the terminal collection $\id$ by the hom sets of the PRO $P$, so that $\mathcal{P}(n,m) = \pi_0({\mathcal{J}}(\coprod_m X, \coprod_n X))\bullet \id$. Recall from \cite{B1, B2} that an $n$-cell of ${\mathcal{P}}(n,m)$ lying over the cell of $\Tone$ named by a globular pasting diagram $\pi$ represents the operation in the strict $\omega$-categorification of $P$ which is equally well described as pasting $n$-tuples of cells together according to the pattern of $\pi$ in to single $n$-tuple of composite cells, then applying an operation of $P$ to get a resulting $m$-tuple of cells, or of applying the operation of $P$ to $n$-tuples of cells compatible on the pattern of $\pi$ to get $m$-tuples of cells compatible on the pattern of $\pi$, then pasting these according to the pattern of $\pi$ into an $m$-tuple of cells.  Briefly an $n$-cell of ${\mathcal{P}}(n,m)$ is named by a pair $(\pi,c)$, where $\pi$ is a globular pasting diagram and $c$ is a path component of ${\mathcal{J}}(\coprod_m X, \coprod_n X)$.

An $n$-cell of $\wkp$ lying over $(\pi,c)$ is named by a span in {\bf Sp}
\[\textstyle |\pi| \stackrel{\phi}{\longleftarrow} D^n \stackrel{\psi}{\longrightarrow} {\mathcal{J}}(\coprod_m X, \coprod_n X) \]
\noindent where $| \; |$ denotes geometric realization and in which the image of $\psi$ lies in the path component $c$, and $\phi$ preserves all (iterated) sources and targets. The source (resp. target) of the cell is named by the span obtained by replacing $|\pi|$ with $|s(\pi)|$ (resp.$|t(\pi)|$, and precomposing both $\phi$ and $\psi$ by $s_{n-1}$ (resp. $t_{n-1}$) from the standard coglobular space, noting that the left composite now has its image in the new left end of the span, by virtue of the condition that $\phi$ preserve sources and targets. 

There are a few things to notice about the cells thus named.  First, the right leg of the span naming a source or target of a cell takes values in the same path component of the mapping space as the right leg of the span naming the cell. Second, that if we fix $c$, because sources and targets are given by precomposing with maps from the standard coglobular space, the cells of $\wkp$ with right leg $c$ form a globular set, which, moreover, because sources and targets are preserved by mapping $\textstyle |\pi| \stackrel{\phi}{\longleftarrow} D^n \stackrel{\psi}{\longrightarrow} {\mathcal{J}}(\coprod_m X, \coprod_n X) $ to $\pi$ is fibered over $\Tone$, and thus has the structure of a collection.
Taking the coproduct over $c \in \pi_0({\mathcal{J}}(\coprod_m X, \coprod_n X))$ gives the collection $\wkp(n,m)$.  Finally, if the pasting diagram $\pi$ involved a $k$-fold iterated identity cell, the preservation of sources and targets required that there be a geometric $n$-cell contained in $D^n$ and intersecting each of the $2^j$ $j$-fold iterates of sources/targets of $D^n$, for $j = 1, \ldots k$, in an $(n-j)$-cell, and which is mapped onto the $(n-k)$-cell in $|\pi|$ representing the $k$-fold iterated identity, in such a way that the preimage of each point of the $k$-cell is an $(n-k)$-cell.

It now remains to show that the collections $\wkp(n,m)$ thus defined are the hom objects of a globular PRO with contraction, and that this globular PRO acts on the weak $\omega$-category $\mathcal{J}{(D^\bullet \bullet X, Y)}$.
The action depends upon the universal properties of colimits and copowers and the hypothesis that the copower functor preserves colimits in the first variable.  From these we have a sequence of isomorphism of globular sets (natural in each variable):

\[ \textstyle  \mathcal{J}{(D^\bullet \bullet X, Y)}^n \cong 
\mathcal{J}{(\coprod_n (D^\bullet \bullet X), Y)} \cong 
\mathcal{J}{(D^\bullet \bullet \coprod_n X, Y)} \] 
\[ \textstyle \cong {\bf Sp}(\{*\}, \mathcal{J}{(D^\bullet \bullet \coprod_n X, Y))} \cong {\bf Sp}(D^\bullet, \mathcal{J}{(\coprod_n X, Y))} \cong
{\bf Sp}(D^\bullet, \mathcal{J}(X, Y)^n) \]

\noindent with similar isomorphisms between sets if the standard coglobular space is replaced with some compact Hausdorff space, for instance the geometric realization of a globular pasting scheme.  By regarding a globular pasting scheme as a globular set with only finitely many cells, the preservation of finite colimits in the first variable of the copower function then has the consequence that

\[ \textstyle \mathcal{J}{(|\pi| \bullet \coprod_nX, Y)} \cong 
\int^{[m] \in {\mathbb G}} \pi([m])\times {\mathcal{J}}(D^m \bullet \coprod_n X, Y) \] 

\noindent as this particular coend can be expressed as a finite colimit.  This observation will be the starting point of the description of the action of our globular PRO on 
$\mathcal{J}{(D^\bullet \bullet X, Y)}$ and one of the ingredients for the composition in the PRO.

The other starting point for the composition is the fact that if a globular pasting scheme $\tau$ arises by compatibly coloring the globes $\gamma$ of a globular pasting scheme $\pi$ with globular pasting schemes $\tau_\gamma$, then the geometric realization $|\tau|$ can be obtained indirectly by first forming the $|\tau_\gamma|$'s and then forming the colimit of the diagram patterned on the source and target inclusions of $\pi$ in {\bf Sp}.  

Applying this last observation to the left legs and that above to the right legs it follows that if we have a coloring of a globular pasting scheme $\pi$ by cells 
\[ \textstyle |\tau_\gamma| \longleftarrow D^n \longrightarrow {\mathcal{J}}(\coprod_m X, \coprod_n X)\]
\noindent in our putative $\wkp(n,m)$ for each cell $\gamma$ of $\pi$, these can be glued together to give a span
\[ \textstyle |\tau| \longleftarrow |\pi| \longrightarrow {\mathcal{J}}(\coprod_m X, \coprod_n X).\]
\noindent And thus, from a cell $\textstyle |\pi| \stackrel{\phi}{\longleftarrow} D^n \stackrel{\psi}{\longrightarrow} {\mathcal{J}}(\coprod_m X, \coprod_n X) $ of $\wkp(n,m)$, and a coloring of the cells $\gamma$ of $\pi$ with cells 
\[ \textstyle |\tau_\gamma| \longleftarrow D^n \longrightarrow {\mathcal{J}}(\coprod_m X, \coprod_n X)\]
\noindent of $\wkp(m,p)$, we may construct the diagram

\begin{center}
    \begin{tikzpicture}
        \node (D) {$D^n$};
        \node (p) [left of=D, below of=D]{$|\pi|$};
        \node (Jpm) [right of=D, below of=D] {${\mathcal{J}}(\coprod_p X, \coprod_m X)$};
        \node (t) [left of=p, below of=p] {$|\tau|$};
        \node (Jmn) [right of=p, below of=p] {${\mathcal{J}}(\coprod_m X, \coprod_n X).$}; 
        \draw[->] (D) to (p);
        \draw[->] (D) to (Jpm);
        \draw[->] (p) to (t);
        \draw[->] (p) to (Jmn);
    \end{tikzpicture}
\end{center}

\noindent Taking the composite along the left edge as the left leg, and the map induce by the universal property of products as the right leg, we obtain a span
\[ \textstyle |\tau|\longleftarrow D^n \longrightarrow {\mathcal{J}}(\coprod_p X, \coprod_m X) \times {\mathcal{J}}(\coprod_m X, \coprod_n X). \]
Postcompsing the right leg with the composition map from $\mathcal{J}$ then gives a span
\[ \textstyle |\tau|\longleftarrow D^n \longrightarrow {\mathcal{J}}(\coprod_p X, \coprod_n X). \]

The operation thus described at the cellular level is the desired composition for the globular PRO structure on $\wkp$.  The necessary associativity follows from the associativity of iterated pasting composition and of composition in $\mathcal{J}$.

The monoidal structure, $+$, on $\wkp$ is given as follows:  given cells in $\wkp(n,m)$ and $\wkp(p,q)$, $\textstyle |\pi| \stackrel{\phi}{\longleftarrow} D^n \stackrel{\psi}{\longrightarrow} {\mathcal{J}}(\coprod_m X, \coprod_n X) $ and $\textstyle |\pi| \stackrel{\xi}{\longleftarrow} D^n \stackrel{\zeta}{\longrightarrow} {\mathcal{J}}(\coprod_p X, \coprod_q X) $ respectively (note that the product is in ${\bf Coll}$ so the pasting diagram in the left legs must be equal), their sum is then given by
\[ \textstyle |\pi| \stackrel{\phi}{\longleftarrow} D^n \stackrel{(\psi,\zeta)}{\longrightarrow} {\mathcal{J}}(\coprod_m X, \coprod_n X) \times {\mathcal{J}}(\coprod_p X, \coprod_q X) \rightarrow {\mathcal{J}}(\coprod_{m+p} X, \coprod_{n+q} X) \]
\noindent where the rightmost map is that induced by binary coproduct on $\mathcal{J}$ as a functor of two variables.

The reader may note that a choice was made, to make the left leg of the sum equal to the left leg of the first input.  This will be of no consequence once the contraction structure has been described, as $|\pi|$ is contractible and there will necessarily be an equivalence between this choice and any other, induced by lifts of iterated identity arrows on $\pi$.

The verifications that the monoidal product $+$ is functorial and associative and unital are easy, though notationally cumbersome, diagram chases at the level of cells and are left to the reader. 

We have thus far not used the hypothesis that the path components of the hom spaces of $\mathcal{J}$ are contractible.  This, however, is crucial to the existence of a contraction for the globular PRO homomorphism $\wkp \rightarrow {\mathcal{P}}$.  Recalling that cells in $\mathcal{P}$ are named by a globular pasting scheme $\pi$ and an operation of $P$, the latter of which in the present context is named by a connected component of a hom-space of $\mathcal{J}$, consider a triple of the sort for which a lift must be described to give a contraction:
\[ \left((\pi,c), |s(\pi)|\stackrel{\phi}\leftarrow D^k \stackrel{\psi}\rightarrow {\mathcal{J}}(\coprod_m X, \coprod_n X), |t(\pi)|\stackrel{\xi}\leftarrow D^k \stackrel{\zeta}\rightarrow {\mathcal{J}}(\coprod_m X, \coprod_n X)\right) \]
\noindent for $\pi$ a $k+1$-dimensional pasting scheme, in which the restrictions of corresponding maps in the spans to the source (resp. target) of $D^k$ agree, and the right legs factor through inclusion of the path component $c$.

Taking the cofibered sum over the source and target gives a span
\[ |\pi| \leftarrow S^k \rightarrow {\mathcal{J}}(\coprod_m X, \coprod_n X) \]
\noindent in which the image of the left leg is the union of the source and target, and the right leg factors through $c$.  Because both $|\pi|$ and $c$ are contractible, this extends to a span 
\[ |\pi| \leftarrow D^{k+1} \rightarrow {\mathcal{J}}(\coprod_m X, \coprod_n X) \]
\noindent and a choice of such an extension for each such triple gives the needed contraction.  (As an aside, one might make a ``uniform'' choice of the contraction by choosing an explicit contraction of each $|\pi|$ for every globular pasting scheme and of each connected component of ${\mathcal{J}}(\coprod_m X, \coprod_n X)$ for each pair $(n,m)$, and always using the fillers induced by this choice.)

The construction of the action on ${\mathcal{J}}(D^\bullet \bullet X, Y)$ is essentially the same as the construction of the right leg of the spans in the composition.  For any cell in $\wkp(n,m)$,  $\textstyle |\pi| \stackrel{\phi}{\longleftarrow} D^k \stackrel{\psi}{\longrightarrow} {\mathcal{J}}(\coprod_m X, \coprod_n X) $, given a coloring of the pasting diagram $|\pi|$ by cells of ${\mathcal{J}}(D^\bullet \bullet X, Y)^n \cong {\mathcal{J}}(D^\bullet \bullet \coprod_n X, Y)$ we have a diagram

\begin{center}
\begin{tikzpicture}
\node (D) {$D^k$};
\node (p) [left of=D, below of=D]{$|\pi|$};
\node (Jpm) [right of=D, below of=D] {${\mathcal{J}}(\coprod_m X, \coprod_n X)$};
\node (Jmn) [right of=p, below of=p] {${\mathcal{J}}(\coprod_n X, Y).$}; 
\draw[->] (D) to (p);
\draw[->] (D) to (Jpm);
\draw[->] (p) to (Jmn);
\end{tikzpicture}
\end{center}

\noindent giving a map $D^k\rightarrow {\mathcal{J}}(\coprod_n X, Y) \times {\mathcal{J}}(\coprod_m X, \coprod_n X)$ which when followed by the composition map from $\mathcal{J}$ gives the action of the cell of $\wkp(n,m)$ on ${\mathcal{J}}(D^\bullet \bullet X, Y)^n$.  Again the necessary associativity for the action follows from that for pasting composition and that for ordinary composition in $\mathcal{J}$.
\end{proof}

We will refer to algebras for $\wkp$ as constructed in the proof, in the case where $P$ is the tautological (classical) PRO on some object $X$ in $ho{\mathcal{J}}^{op}$ for some category of structured spaces, as {\em naive homotopy $\omega$-categorifictions of the theory of $X$}.

\section{Examples}
Our two primary examples to which this construction applies are those mentioned above in Section \ref{spaces}, the pointed circle $(S^1, 1)$ in {\bf Sp*} and the ``lollipop'' $N := (\{0\}, D^2\cup [1,5], 5)$ in {\bf JP} (both the circle and lollipop are described as subspaces of $\mathbb C$).  That the extra hypothesis of contractible path components in the mapping spaces holds follows in the case of $(S^1, 1)$ by virtue of the triviality of all higher homotopy groups of bouquets of circles, and in the case of the lollipop by reducing the argument to the same by first observing that any path component has representative maps with the following properties
\begin{itemize}
    \item for each copy of $N$ in the source $\coprod_m N$, the disk is mapped by $re^{i\theta} \mapsto re^{ip\theta}$ for some $p \in {\mathbb Z}$ to a disk in one of the copies of $N$ in the target $\coprod_n N$.
    \item for each copy of $N$ in the source $\coprod_m N$, the ``tail'' $[1,5]$ is mapped by a composition of paths, each of which (described as a map from $[0,1]$) is one of
    \begin{itemize}
        \item[$\diamondsuit$] $x \mapsto 4x+1$ followed by inclusion onto a copy of [1,4] in one of the copies of $N$ in the target $\coprod_n N$
        \item[$\diamondsuit$] $x \mapsto 5-4s$ followed by inclusion onto a copy of [1,4] in one of the copies of $N$ in the target $\coprod_n N$
        \item[$\diamondsuit$] $x \mapsto e^{2\pi i x}$ followed by inclusion onto a copy of $D^2$ in one of the copies of $N$ in the target $\coprod_n N$
        \item[$\diamondsuit$] $x \mapsto e^{-2\pi i x}$ followed by inclusion onto a copy of $D^2$ in one of the copies of $N$ in the target $\coprod_n N$.
    \end{itemize}
\end{itemize}

In the first of these examples, the globular set ${\bf Sp*}(D^\bullet \bullet (S^1,1), (Y,*))$ has the structure of a weak model of the theory of $((S^1,1),{\bf Sp*})$ and thus of a weak group, $(S^1,1)$ being a homotopy cogroup in ${\bf Sp*}$.  It is, unsurprisingly, essentially the same data as the one object $\infty$-groupoid equivalent to the fundamental groupoid (of the path component of $*$), but with path composition as a monoidal structure on the 0-cells named by loops at $*$ and all the categorical dimensions reduced by one.  Hardly worth all the bother we have gone through.

The second of these examples, however, is new and is in fact the original motivation for \cite{B1, B2}.  ${\bf JP}(D^\bullet \bullet N, (\Sigma, X, *))$ is a weak $\omega$-categorification of the fundamental quandle of the Joyce pair $(\Sigma, X, *)$.  In the next section we will discuss this example in more detail, constructing a weak $\omega$-categorification of the knot quandle.

It it worth mentioning in passing what is at this point a non-example.  If one considers the theory of $((S^n,*),ho{\bf Sp*}^{op})$ for $n>1$, one cannot weakly $\omega$-categorify it by the method described herein -- non-trivial elements of the homotopy groups of $S^n$ are obstructions to the construction of a contraction on the globular PRO of spans.  It is unclear at this writing whether there is a meaningful way of restricting which spans are allowed to represent $n$-cells of the categorification which would give a useful categorification of either the full theory of $((S^n,*),ho{\bf Sp*}^{op})$ or of the theory of abelian groups, for which ${\bf Sp*}(D^\bullet \bullet (S^n,*), (Y,*))$ would be a model.  

\section{Weak $\omega$-quandles}

In addition to ${\bf JP}(D^\bullet \bullet N, (\Sigma, Y, *))$ as a naive homotopy categorification of the theory of the lollipop in {\bf JP}, and thus an $\omega$-categorified quandle, we can modify the construction slightly to give other $\omega$-categorified quandles.  

First, as in \cite{J}, if $\Sigma \subset Y$ is a properly embedded oriented codimension 2 submanifold of an oriented manifold $Y$ (possibly with boundary), then ${\bf JP}(N, (\Sigma, Y, *))$ decomposes as a union of subspaces ${\bf JP}(N, (\Sigma, Y, *))_k$ each of which contains those maps from $N$ to $(\Sigma, Y, *)$ in which the linking number of $\Sigma$ and the bounding $S^1$ of the lollipop is $k$.  By the isotopy invariance of linking number, these subspaces are each necessarily a (disjoint) union of path components of the entire mapping space, from which it follows that the same decomposition may be applied to the globular space ${\bf JP}(D^\bullet \bullet N, (\Sigma, Y, *))$. 

Notice that this observation may be applied in particular to $(\Sigma, Y, *) = N$, and moreover that all of the operations which give $N$ its homotopy coquandle structure are named by the path components of ${\bf JP}(\coprod_m N, \coprod_n N)$ in which the image of the boundary of each disk in the source links the copy of $\coprod_n \{0\}$ in the target exactly once.  The PRO homomorphism from the PRO for the theory of quandles to $Taut_{\bf JP}(N)$ thus factors through this sub-PRO, which we will denote by $P_1$.  The same construction as given for tautological PROs in the main theorem, then will give a globular PRO with contraction $\wkp_1$ over ${\mathcal{P}}_1$, whose cells are named by spans of the same form as in $\wkp$, but with the right leg taking values in the path components in which every boundary circle has linking number 1 with the subspace.  The action similarly restricts to an action of $\wkp_1$ on ${\bf JP}(C^\bullet \bullet N, (\Sigma, Y, *))_1$ for any Joyce pair in which $\Sigma$ is a properly embedded submanifold of codimension 1 in a manifold $Y$.  Thus we have a weak $\omega$-categorification of the knot quandle of the pair $(\Sigma, Y, *)$.

We have, in fact, nearly constructed one more family of weakly $\omega$-categorified quandles.  Recall that the equational theory of quandles is precisely the equational theory of the two conjugation operations on groups. From this it follows that there is a PRO homomorphism from the classical PRO for quandles to that for groups.  These in turn lift to homomorphisms of the globularizations and their weakenings, and thus any model for the weak $\omega$-catetorification of groups gives rise to a model for the weak $\omega$-categorification of quandles.
In particular the globular set ${\bf Sp*}(D^\bullet \bullet (S^1,1), (Y,*))$ for any pointed space has the structure of a weak $\omega$-categorified quandle.

It is also by similar methods possible to construct $n$-categorified quandles for any $n \in \mathbb{N}$.  At the level of theories, this is accomplished either by working with presheaves on a truncated version of $\mathbb{G}$ or ``beheading'' the construction given in \cite{B1,B2} (cf. \cite{CL}).  At the level of the models constructed herein, the homotopies of $n$-times iterated homotopies must be taken as generating an equivalence relation on $n$-times iterated homotopies.  

Several avenues of research involving categorified quandles are now open: 

The first is an examination of their structure in low categorical dimensions.  Just as the defining equations of quandles correspond to the Reidemeister moves \cite{Re}, in the theory of $\lambda$-categorified quandles for any $\lambda \in ({\mathbb{N} \setminus \{0\}}) \cup \{\omega \}$ there are  1-cells which are lifts of these equations and thus correspond to implementing Reidemeister moves, and in models, 1-arrows which should in some sense represent instances of Reidemeister moves.   For $\lambda > 1$ there must also be 2-cells in the theory corresponding to the Roseman moves \cite{Ro}, the inital and final states of which correspond to different sequences of Reidemeister moves which accomplish the same overall isotopy, and for $\lambda = 1$ there must be coherence conditions corresponding to the Roseman moves on the 1-cells corresponding to the Reidemeister moves. 

We conjecture that the 1-cells corresponding to the Reidemester moves, along with those in $\ip(1,2)$ lifting the diagonal and in $\ip(2,1)$ lifting the projections, generate the 1-arrows of $\ip$ ($P$ here being the PRO presenting the theory of quandles), and moreover, in the 1-categorification of the theory, the equations on these are generated by those corresponding to the Roseman moves.   Further if $\lambda > 1$, we conjecture that the 2-cells are generated by those in $\ip(1,2)$ lifting the diagonal and in $\ip(2,1)$ lifting the projections and those corresponding to the Roseman moves.

The second, given the presence of the Roseman moves among relations (for $\lambda = 1$) or the 2-arrows (for $\lambda > 1$) in the theory of categorified quandles, is to investigate the use of categorified quandles to give invariants of knotted surfaces in $\mathbb{R}^4$.


\refs

\bibitem[Batanin, 1998]{B} Batanin, M., ``Monoidal globular categories as a natural environment for the theory of weak $n$-categories,'' {\em Adv. Math.} {\bf 136}(1) (1998) 39-103.

\bibitem[Batanin \& Markl, 2012]{BM} Batanin, M. and  Markl, M., ``Centers and Homotopy Centers in Enriched Monoidal Categories,'' {\em Adv. Math.} {\bf 230} (2012) 1811-1858.

\bibitem[Bressie, 2019]{B1} Bressie, P., ``Globular PROs and weak $\omega$-categorification of algebraic theories,'' Kansas State University doctoral dissertation (2019) (retrievable from https://krex.k-state.edu/dspace/handle/2097/39788).

\bibitem[Bressie, 2020]{B2} Bressie, P.,  ``The $\omega$-categorification of algebraic theories,'' e-print \# arXiv:2006.07191 (2020).

\bibitem[Cheng \& Lauda, 2004]{CL} Cheng, E. and Lauda, A., {\em Higher Categories: an illustrated guidebook} (2004) retrievable from http://eugeniacheng.com/wp-content/uploads/2017/02/cheng-lauda-guidebook.pdf.

\bibitem[Crane, 1995]{C} Crane, L., ``Clock and category:  is quantum gravity algebraic?,'' {\em Journal of Mathematical Physics}, {\bf 36} (1995), 6180–93.

\bibitem[Crane, private communication]{Cp} Crane, L., private communication.

\bibitem[Crane \& Frenkel, 1994] {CF} Crane, L. and Frenkel, I., ``Four‐dimensional topological quantum field theory, Hopf categories, and the canonical bases,'' J. Math. Phys. {\bf 35} (10) (1994) 5136-5145.

\bibitem[Crane \& Yetter, 1998]{CY} Crane, L. and Yetter, D.N., ``Examples of Categorification,''  {\it Cahiers de Topologie et G\'{e}om\'{e}trie Diff\'{e}rentielle Cat\'{e}gorique}. {\bf 39} (1)
(1998) 3-25.

\bibitem[Joyce, 1982]{J} Joyce, D., ``A classifying invariant of knots, the knot quandle,'' {\em Journal of Pure and Applied Algebra,} {\bf 23} (1982) 37–65. 

\bibitem[Kelly, 1980]{K} Kelly, G.M., ``A unitifed treatment of transfinite constructions of free algebras, free monoids, colimits, associated sheaves, and so on,''  {\em Bull. Aust. Math. Soc.} 22(1) (1980) 1-83.

\bibitem[Leinster, 2004]{L} Leinster, T., {\em Higher Operads, Higher Categories},
London Mathematical Society Lecture Notes Series, Cambridge University Press (2004).

\bibitem[Mac Lane, 1965]{M65} Mac Lane, S., ``Categorical algebra,'' {\em Bull. Am. Math Soc.} 71(1) (1965) 40-106.

\bibitem[Power, 1991]{P} Power, J., ``An n-categorical pasting theorem,'' in {\em Category Theory} (Carboni, A., Pedicchio, M.C. and Rosolini, G, eds.) Springer (1991) 326-358.

\bibitem[Reidemeister, 1932]{Re} Reidemeister, K. {\em Knottentheorie} Springer-Verlag, Berlin, (1932).

\bibitem[Roseman, 1998]{Ro} Roseman, D., ``Reidemeister-type moves for surfaces in four-dimensional space,'' in  {\em Knot Theory, Papers from the mini-semester, Warsaw, 1995}, Banach Center Publications, vol. 42, Polish Acad. Sci. (1998)347-380

\bibitem[Strickland, undated]{S} Strickland, N., ``The Category of CGWH Spaces,'' U. Sheffield course notes, retrieved from http://neil-strickland.staff.shef.ac.uk/courses/homotopy/cgwh.pdf

\bibitem[Yetter, 2001]{Ybook} Yetter, D.N., {\em Functorial Knot Theory:  Categories of Tangles, Coherence, Categorical Deformations and Topological Invariants,} Series on Knots and Everything vol. 26, World Scientific Press (2001).

\endrefs

\end{document}